\newcolumntype{G}{>{\columncolor{gray!15}}c}
\newcolumntype{W}{c}
\newcommand{\Input}{\State \textbf{Input:} }
\newcommand{\Output}{\State \textbf{Output:} }
\newtheorem{thm}{Theorem}[section]
\newtheorem{con}[thm]{Conjecture}
\newtheorem{lem}[thm]{Lemma}
\newtheorem{exm}[thm]{Example}
\newtheorem{rem}[thm]{Remark}
\numberwithin{equation}{section}
\newcommand\blfootnote[1]{%
  \begingroup
  \renewcommand\thefootnote{}\footnote{#1}%
  \addtocounter{footnote}{-1}%
  \endgroup
}
\begin{document}
\title[Reconstruction of Permutations]{The Sequence Reconstruction of Permutations under Hamming Metric with Small Errors}
\author[\tiny{Abdollahi}]{A. Abdollahi}%
\author[\tiny{Bagherian}]{J. Bagherian}
\author[\tiny{Eskandari}]{H. Eskandari}%
\author[\tiny{Jafari}]{F. Jafari}
\author[\tiny{Khatami}]{M. Khatami}%
\address{Department of Pure Mathematics, Faculty of Mathematics and Statistics,  University of Isfahan, Isfahan 81746-73441, Iran.}%
\email{a.abdollahi@math.ui.ac.ir}%
\email{bagherian@sci.ui.ac.ir}
\email{Hajjar1360Eskandari@gmail.com}%
\email{math\_fateme@yahoo.com}
\email{m.khatami@sci.ui.ac.ir}
\author[\tiny{Parvaresh}]{F. Parvaresh}
\address{ Department of Electrical Engineering, Faculty
of Engineering, University of Isfahan, Isfahan 81746-73441, Iran.}
\email{f.parvaresh@eng.ui.ac.ir}
\author[\tiny{Sobhani}]{R. Sobhani}%
\address{Department of Applied Mathematics and Computer Science, Faculty of Mathematics and Statistics,  University of Isfahan, Isfahan 81746-73441, Iran.}%
\email{r.sobhani@sci.ui.ac.ir}
\thanks{Corresponding Author: A. Abdollahi}
\blfootnote{Mathematics Subject Classification 2020: 94B25; 68P30; 94A15.}
\keywords{Sequence reconstruction,  Hamming distance, Permutation codes.}

\begin{abstract}
The sequence reconstruction problem asks for the recovery of a sequence from multiple noisy copies, where each copy may contain up to $r$ errors. In the case of permutations on \(n\) letters under the Hamming metric, this problem is closely related to the parameter $N(n,r)$, the maximum intersection size of two Hamming balls of radius $r$.

While previous work has resolved \(N(n,r)\) for small radii (\(r \leq 4\)) and established asymptotic bounds for larger \(r\), we present new exact formulas for \(r \in \{5,6,7\}\) using  group action techniques. In addition, we develop a  formula for \(N(n,r)\) based on the irreducible characters of the symmetric group \(S_n\), along with an algorithm that enables computation of \(N(n,r)\) for larger parameters, including cases such as \(N(43,8)\) and \(N(24,14)\).
\end{abstract}
\maketitle
\section{Introduction and Results}

The sequence reconstruction problem, introduced by Levenshtein in 2001 \cite{[14], [15]},
involves reconstructing a transmitted sequence from multiple noisy copies received over distinct channels, each introducing at most $r$ errors. Later, this problem has attracted interest in applications such as  DNA storage \cite{[3],[13]}, racetrack memories \cite{[2]}, and communication systems. In combinatorial terms, a key challenge is to determine the maximum intersection size  $N(n, r)$ of two metric balls of radius $r$ centered at distinct sequences. For permutations under the Hamming metric, this problem remains central due to the relevance of permutation codes in flash memories \cite{[8]}, power-line communications \cite{[18],[22]}, and DNA storage \cite{[1],[19]}.  

Main contributions of \cite{WFK} resolves the sequence reconstruction problem for permutations under Hamming errors for small radii ($r \leq 4$) and provides asymptotic bounds for larger $r$. In particular,  it is proved that $N(n, 2) = 3$ ($n \geq 3$) \cite[Theorem 4]{WFK}; $N(n, 3) = 4n - 6$ ($n \geq 3$) \cite[Theorem 5]{WFK}; and $N(n, 4) = 7n^2 - 31n + 36$ ($n \geq 4$)  \cite[Theorem 6]{WFK}. For example, the latter implies that unique reconstruction requires $N(n,4)+1=7n^2 - 31n + 37$ distinct permutations at distance $\leq 4$.     

Here by using some elementary results about action of groups on sets, we find an algorithm with input $r$ and output $N(n,r)$. Applying the algorithm we find the exact values of $N(n,r)$ for $r\in \{5,6,7\}$, as follows:
\begin{thm}\label{th5}
 The following hold:
\begin{itemize}
\item[(i)]  $ N(n,5) = \dfrac{32}{3}n^3 - 89n^2 + \dfrac{739}{3}n - 220$, for  $ n \geq 5$.
\item[(ii)] $ N(n,6) = \dfrac{181}{12}n^4 - \dfrac{401}{2}n^3 + \dfrac{11783}{12}n^2 - \dfrac{4153}{2}n + 1590$,  for  $ n \geq 6$.
\item[(iii)] $N(n,7)= \dfrac{607}{30}n^5 - \dfrac{4675}{12}n^4 + \dfrac{8807}{3}n^3 - \dfrac{129041}{12}n^2 + \dfrac{190471}{10}n - 12978$, for $ n \geq 7$.
\end{itemize}
\end{thm}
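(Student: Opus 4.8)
The plan is to reduce the computation of $N(n,r)$ to a character-theoretic factorization count and then optimize over cycle types. First I would exploit the bi-invariance of the Hamming metric on $S_n$: since $d_H(\rho\pi,\rho\sigma)=d_H(\pi\rho,\sigma\rho)=d_H(\pi,\sigma)$ for every $\rho$, the intersection $B_r(\pi_1)\cap B_r(\pi_2)$ has the same size as $B_r(e)\cap B_r(\sigma)$ with $\sigma=\pi_1^{-1}\pi_2$, so that $N(n,r)=\max_{\sigma\neq e}|B_r(e)\cap B_r(\sigma)|$ and this size depends only on the cycle type $\lambda$ of $\sigma$. Writing $\mathrm{supp}$ for the set of non-fixed points, a permutation $\pi$ lies in $B_r(e)\cap B_r(\sigma)$ exactly when $|\mathrm{supp}(\pi)|\le r$ and $|\mathrm{supp}(\sigma^{-1}\pi)|\le r$; setting $\alpha=\pi$ and $\delta=\pi^{-1}\sigma$, this is the same as counting factorizations $\sigma=\alpha\delta$ in which both factors have support of size at most $r$. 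In particular the intersection is empty unless $|\mathrm{supp}(\sigma)|\le 2r$, so only finitely many cycle types $\lambda$ are relevant, independently of $n$.

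Next I would package this factorization count through the Frobenius formula for class-algebra connection coefficients. Grouping the admissible factors by their conjugacy classes $\mu,\nu$ (those with at most $r$ moved points) and using that the characters of $S_n$ are real, one obtains
\[
|B_r(e)\cap B_r(\sigma)| \;=\; \frac{1}{n!}\sum_{\chi}\frac{\chi(\lambda)}{\chi(1)}\,S_\chi^2,\qquad S_\chi=\sum_{\mu:\,|\mathrm{supp}|\le r}|C_\mu|\,\chi(\mu),
\]
where the outer sum runs over the irreducible characters $\chi$ of $S_n$ and $C_\mu$ is the class of type $\mu$. This is the character formula advertised in the abstract. Combined with the Farahat--Higman polynomiality of class-algebra structure constants, it shows that for each fixed reduced cycle type the quantity $g_\lambda(n):=|B_r(e)\cap B_r(\sigma)|$ is a polynomial in $n$; a short bookkeeping argument on how many ``fresh'' points (outside $\mathrm{supp}(\sigma)$) a balanced factorization can move gives $\max_\lambda \deg g_\lambda = r-2$, consistent with the degrees appearing in the statement.

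The algorithm then enumerates every reduced cycle type $\lambda$ with $2\le|\mathrm{supp}|\le 2r$, evaluates $g_\lambda(n)$ as an explicit polynomial via the connection-coefficient formula (equivalently, by direct enumeration of the interaction patterns near $\mathrm{supp}(\sigma)$ together with the binomial choice of fresh moved points), and returns $N(n,r)=\max_\lambda g_\lambda(n)$. For $r\in\{5,6,7\}$ there are only finitely many such $\lambda$, so the procedure terminates and outputs the three polynomials claimed. To finish I would certify that a single cycle type attains the maximum for all $n\ge r$: since the leading competitors share the same degree $r-2$, the winner is decided by lower-order coefficients, so one must compare the finitely many candidate polynomials over the entire range $n\ge r$, with particular care near the threshold.

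I expect the main obstacle to be precisely this last maximization step. The naive guess that a transposition is extremal already fails at $r=2$, where the optimal $\sigma$ is a $3$-cycle (yielding $N(n,2)=3$ rather than $2$), so identifying the extremal cycle type is genuinely delicate and could, a priori, change with $n$. The heart of the proof is therefore to show that the optimizer stabilizes to one cycle type throughout $n\ge r$ and that no competing polynomial overtakes it; this requires controlling the character sums $S_\chi$ and, above all, analyzing the crossover behaviour of the candidate polynomials near $n=r$, where the large-$n$ leading-term heuristics are least reliable.
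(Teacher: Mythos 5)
Your framework is sound, and it in effect combines both halves of the paper: the translation/conjugation reduction to counting factorizations $\sigma=\alpha\delta$ with $|M(\alpha)|,|M(\delta)|\le r$ is Lemma~\ref{lme6} together with Remark~\ref{rem}, and your character-sum identity is exactly the formula of Lemma~\ref{31} and Theorem~\ref{thchar} after interchanging the order of summation (the paper uses that formula only for fixed-$n$ computations in its last section, not to prove this theorem). Your route to polynomiality in $n$ via Farahat--Higman polynomiality of class-algebra structure constants is a genuine alternative to the paper's elementary mechanism, Lemmas~\ref{lem1}--\ref{lem5}, which reduce the centralizer-orbit decomposition to $S_{2r}$ and exhibit each $|\Omega_{n,r}(\pi)|$ as an explicit polynomial in $n$. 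Your warning that the transposition cannot be assumed extremal a priori (it fails at $r=2$, where a $3$-cycle gives $N(n,2)=3$) is also correct, and is precisely why the extremality must be verified rather than read off from Conjecture~\ref{con}.

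The genuine gap is that your proposal stops exactly where the proof has to do its work. The theorem asserts three specific polynomials and, implicitly, that among the $41$, $77$, and $135$ candidate polynomials (one per nontrivial conjugacy class of $S_{2r}$ whose elements move at most $2r$ points) the transposition's polynomial is the maximum for every $n\ge r$; your outline neither produces these polynomials nor performs that comparison, and you explicitly defer both as ``the heart of the proof'' and the ``main obstacle''. This deferred step is the entire content of the paper's argument: Algorithm~1 run in \textsf{GAP} generates the full polynomial lists (Appendix~\ref{appen}), a symbolic comparison in \textsf{MATLAB} (``isAlways'') certifies that the transposition's polynomial dominates all others for $n\ge 2r$, and the threshold range $r\le n\le 2r-1$ is settled by separate direct computation (Table~\ref{T1}), since there the reduction to $S_{2r}$ does not apply and classes moving more than $n$ points do not exist in $S_n$. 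Two auxiliary claims you lean on are also unproven as stated: the degree bound $\max_\lambda\deg g_\lambda=r-2$ (naive support counting only yields $r-\lceil|M(\sigma)|/2\rceil$, which is $r-1$ for a transposition, so your ``short bookkeeping argument'' must exploit the product structure, not just budgets of moved points), and the validity of the large-$n$ polynomial comparison in the range $r\le n<2r$, which you acknowledge only as needing ``particular care''. Until the finite symbolic computation is actually carried out and these two points are supplied, the statement is not proved.
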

Furthermore, in Theorem  \ref{thchar}, we provide a formula for computing $N(n,r)$ using the irreducible characters of the symmetric group $S_n$ and an algorithm for computing \(N(n,r)\) in this way. The results obtained from implementing this algorithm in \textsf{GAP} \cite{GAP} are presented in Table \ref{table2}, which displays the computed values of $N(n,r)$.
\section{Preliminaries}

 Let $n$ be a positive integer and let $[n]:=\{1,\dots, n\}$. For any non-empty subset $A$ of $[n]$ we denote by $S_A$ the set of all permutations on $[n]$ such that they induce the identity map on $[n]\setminus A$; in particular $S_{[m]}$ is denoted by $S_m$ for any $m\in [n]$. We denote the identity element of $S_n$ by $I_n$. For $\sigma \in S_n$, $M(\sigma):=\{i\in [n] \;|\; i^\sigma\neq i\}$, where $i^\sigma$ denotes the image of $i$ under $\sigma$. We define $T_r^n$  as $\{\sigma \in S_n \;|\; |M(\sigma)|\leq r\}$. The composition $\sigma \tau$ of two permutations  $\sigma,\tau \in S_n$ is defined by $i^{\sigma \tau}=(i^\sigma)^\tau$ for any $i\in [n]$.  It is well-known that every permutation in the  \( S_n \) can be uniquely expressed as a product of disjoint cycles, except for changes in the order of the factors, so that the rule of commutation is the property of being disjoint. The cycle type of a permutation \( \pi \in S_n \) is a sequence \(1^{t_1}, \dots , n^{t_n} \),   where $t_i$ denotes the number of disjoint cycles of length $i$ in the cycle decomposition of \( \pi \). By convention, any term with \(t_i = 0\) is omitted; this applies to \(1^{t_1}\) as well, so fixed points are not shown. Note that for a permutation $\pi$ with such a cyclic type we have $|M(\pi)|=\sum_{i=2}^{n}i\times t_i$.

  For two permutations $\delta$ and $\sigma$, we denote $\delta^{-1} \sigma \delta$ by $\sigma^\delta$  which is called the conjugate of $\sigma$ by $\delta$. This defines an equivalence relation on $S_n$, whose equivalence classes are called conjugacy classes.  
The conjugacy class of a permutation $\sigma \in S_n$ is the set
$
 \{ \sigma^\delta \mid \delta \in S_n \}.
$  It is well known that two permutations in $S_n$ are conjugate if and only if they have the same cycle type; therefore, each cycle type corresponds to a unique conjugacy class of $S_n$.
For any $\pi \in S_n$ and $r,s\in [n]$ such that $r\leq s$, we denote the set $\{(\sigma,\tau)\in T_r^{s} \times T_r^{s} \;|\; \sigma \tau =\pi\}$ by $\Omega_{s,r}(\pi)$. We denote by $C_{S_A}(\sigma)$ the centralizer of $\sigma$ in $S_A$, that is, $\{\delta \in S_A \;|\; \delta \sigma=\sigma \delta\}$.

For any two permutations $\sigma, \pi \in S_n$, the Hamming distance $d(\sigma, \pi)$ between $\sigma$ and $ \pi$  is defined by $|M(\sigma\pi^{-1})|$. For a given $\sigma \in S_n$ and a non-negative integer $r$, the Hamming ball of radius $r$ centered at $\sigma$ is defined as
$
B_r^n(\sigma) := \{\, \pi \in S_n \;|\; d(\sigma,\pi) \le r \,\}.$
For integers $d$ and $r$, define $I(n,d,r)$ as the maximum possible size of the intersection of two metric balls of radius $r$ in $S_n$, whose centers are exactly at distance $d$:
\[
I(n,d,r) := \max \bigl\{ |B_r^n(\pi) \cap B_r^n(\tau)| \; : \; \pi, \tau \in S_n, \; d(\pi,\tau) = d \bigr\}.
\]

Then  $N(n,r)$, which represents the minimum number of channels needed to guarantee correct decoding of any transmitted permutation with up to $r$ errors per channel, can be expressed as
\[
N(n,r) := \max_{\pi, \tau \in S_n,\, d(\pi,\tau)\ge 2} |B_r^n(\pi) \cap B_r^n(\tau)| 
= \max_{d \ge 2} I(n,d,r).
\]

The authors in~\cite{WFK} determined the exact values of $I(n,d,r)$ for $d = 2,3,4$ and for any $r \ge 2$. Moreover, they proposed the following conjecture:
\begin{con}\cite{WFK}\label{con}
For any $r \ge 5$ and $n \ge r$, we have
$
N(n,r) = I(n,2,r).
$
\end{con}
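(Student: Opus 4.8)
The plan is to reduce $N(n,r)=\max_{d\ge 2}I(n,d,r)$ to a count of factorizations governed purely by the cycle type of a single permutation, and then to show that, among all cycle types, the one producing the largest count is a single transposition (which has $d=2$). First I would establish a translation/conjugation reduction. Right-multiplying by $\pi^{-1}$ shows that $|B_r^n(\pi)\cap B_r^n(\tau)|$ depends only on $\rho:=\pi\tau^{-1}$: writing $a=\sigma\pi^{-1}$ and $b=\sigma\tau^{-1}$, the conditions $\sigma\in B_r^n(\pi)\cap B_r^n(\tau)$ become $a,b\in T_r^n$ with $a^{-1}b=\rho$, whence (after replacing $a$ by $a^{-1}$, using that $T_r^n$ is closed under inverses)
\[
|B_r^n(\pi)\cap B_r^n(\tau)|=|\Omega_{n,r}(\rho)|,\qquad |M(\rho)|=d(\pi,\tau).
\]
Since $(\sigma,\tau)\mapsto(\sigma^\delta,\tau^\delta)$ is a bijection $\Omega_{n,r}(\rho)\to\Omega_{n,r}(\rho^\delta)$ and $|M(\sigma^\delta)|=|M(\sigma)|$, the value $|\Omega_{n,r}(\rho)|$ depends only on the cycle type of $\rho$. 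Hence $I(n,d,r)=\max\{|\Omega_{n,r}(\rho)|:|M(\rho)|=d\}$ and $N(n,r)=\max_{\rho\neq I_n}|\Omega_{n,r}(\rho)|$. Because $M(\rho)\subseteq M(\sigma)\cup M(\tau)$ for any $(\sigma,\tau)\in\Omega_{n,r}(\rho)$, we get $d\le 2r$, so only the finitely many cycle types with support size between $2$ and $2r$ compete for the maximum.

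Next I would make each $|\Omega_{n,r}(\rho)|$ explicit as a polynomial in $n$. Fix $\rho$ of type $\lambda$ with $|M(\rho)|=d$ and split the count according to the set of extra moved points $E:=(M(\sigma)\cup M(\tau))\setminus M(\rho)$. For a fixed $E$ of size $e$ disjoint from $M(\rho)$, the number of pairs $(\sigma,\tau)$ with $\sigma\tau=\rho$, $M(\sigma)\cup M(\tau)=M(\rho)\cup E$, and $|M(\sigma)|,|M(\tau)|\le r$ is a constant $g_\lambda(e)$ computed inside $S_{M(\rho)\cup E}\cong S_{d+e}$, independent of $n$; the $\binom{n-d}{e}$ choices of $E$ then give
\[
|\Omega_{n,r}(\rho)|=\sum_{e\ge 0}\binom{n-d}{e}\,g_\lambda(e).
\]
This is where the group-action bookkeeping enters: the local counts $g_\lambda(e)$ are orbit counts handled via the centralizers $C_{S_A}(\sigma)$, and they are exactly what an algorithm with input $r$ can tabulate. (Equivalently, the character formula of Theorem~\ref{thchar} expresses $|\Omega_{n,r}(\rho)|$ as a sum over the irreducible characters of $S_n$, giving a second route to the same polynomials.) In particular each $I(n,d,r)$ is an explicit polynomial in $n$ whose degree is the largest $e$ with $g_\lambda(e)\neq 0$.

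The heart of the argument is to show the transposition type dominates. For $\rho=(1\,2)$ the family $\sigma=(1\,2)\alpha$, $\tau=\alpha^{-1}$ with $\alpha$ supported off $\{1,2\}$ and $|M(\alpha)|\le r-2$ already forces $g_\lambda(e)\neq 0$ up to $e=r-2$, so $I(n,2,r)$ has degree $r-2$ and one computes its leading coefficient explicitly. I would then prove that every other type yields a polynomial that does not exceed this one for all $n\ge r$. The delicate point is that $d=2$ is not the only type reaching degree $r-2$: a $3$-cycle written as $(1\,3)\alpha\cdot(2\,3)\alpha^{-1}$, or the type $2^2$ written as $(1\,2)\alpha\cdot(3\,4)\alpha^{-1}$ (with $\alpha$ supported off the respective supports), also attain degree $r-2$. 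Hence the comparison cannot be settled by degree alone; one must compare leading coefficients among all degree-$(r-2)$ types and then control lower-order terms to cover small $n$ as well.

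The main obstacle is making this last comparison uniform in $r$. For a fixed small radius the competition is a finite computation — this is what the algorithm performs and what produces the closed forms of Theorem~\ref{th5} for $r\in\{5,6,7\}$, each of which turns out to equal $I(n,2,r)$. But proving $N(n,r)=I(n,2,r)$ for \emph{all} $r\ge 5$ demands a type-free bound showing that the top coefficient of any admissible type is at most that of the transposition, together with uniform control of the remaining terms valid for every $n\ge r$. The number of competing cycle types grows with $r$ (roughly like the partitions of the integers up to $2r$), and I do not see a single monotonicity or injection argument dominating all of them simultaneously; supplying such a uniform argument, in place of the finite per-$r$ verification, is precisely the gap that keeps the statement at the level of a conjecture.
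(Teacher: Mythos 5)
The statement you were asked to prove is labelled a \emph{conjecture}, and you were right not to claim a proof: the paper does not prove it in general either. What the paper actually establishes is (a) the reduction machinery (Lemma~\ref{lme6}, Remark~\ref{rem}, Lemmas~\ref{lem1}--\ref{lem5}), (b) a finite-computation proof for $r\in\{5,6,7\}$ (Theorem~\ref{th5}, via Algorithm~1 and a symbolic comparison of the resulting polynomials in \textsf{MATLAB}), and (c) numerical confirmation for various $(n,r)$ with $8\le r\le 14$ via the character formula of Theorem~\ref{thchar} (Table~\ref{table2}); for each fixed $r$ the conjecture is also known from \cite{WFK} when $n$ is sufficiently large, so the open content is precisely uniformity down to $n\ge r$. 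Your framework coincides with the paper's: your translation/conjugation reduction is Lemma~\ref{lme6} together with Remark~\ref{rem}; your decomposition $|\Omega_{n,r}(\rho)|=\sum_{e}\binom{n-d}{e}g_\lambda(e)$ is an equivalent repackaging of the orbit-representative formula of Lemma~\ref{lem5}, both expressing $|\Omega_{n,r}(\rho)|$ as a polynomial in $n$ whose coefficients are determined by a computation inside $S_{2r}$; and your plan of comparing the finitely many polynomials for a fixed $r$ is exactly what the proof of Theorem~\ref{th5} carries out.

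Your intermediate claims are sound. The support bound $M(\rho)\subseteq M(\sigma)\cup M(\tau)$ does restrict the competition to $2\le d\le 2r$; for the transposition the degree is indeed exactly $r-2$ (the upper bound, which you assert but do not argue, follows from the observation that any extra moved point must lie in $M(\sigma)\cap M(\tau)$ while $\{1,2\}$ is also absorbed into $M(\sigma)$ or $M(\tau)$); and your warning that degree alone cannot settle the comparison is confirmed by the paper's computed data: for $r=5$ the transposition class gives leading term $\frac{32}{3}n^3$, but other classes (the $3$-cycle and $2^2$ types) also produce cubic polynomials, with smaller leading coefficients $\frac{2}{3}$ and $\frac{11}{2}$. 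The gap you name at the end --- the absence of a dominance argument that is uniform in $r$, as opposed to a per-$r$ finite verification over a set of cycle types that grows like the number of partitions of integers up to $2r$ --- is genuine, and it is exactly why the statement remains Conjecture~\ref{con} in this paper and in \cite{WFK}. So your text should be read not as a proof but as a correct diagnosis of the state of the problem; restricted to any fixed $r\in\{5,6,7\}$, filling in your outline amounts to re-running the paper's own argument.
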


In \cite{WFK}, it is also shown that  Conjecture \ref{con} is valid for $r = 3, 4$ and for any $r \ge 5$ when $n$ is large enough \cite[Theorem 9]{WFK} and \cite[Lemma 3]{WFK}. In this paper, we will prove the validity of this conjecture for $r = 5, 6, $ and $7$. Most of the following lemma, which will be used in later sections, has been proved in  \cite[Lemma 7]{WK}. It is presented here for the reader’s convenience and to clarify its relation to the new notations.
\begin{lem}\label{lme6}
For any $r\in [n]$ and for all $\pi,\sigma \in S_n$, we have $|\Omega_{n,r}(\pi)|=|B_r^n(\pi)\cap B_r^n(I_n)|$ and 
 $|\Omega_{n,r}(\pi^\sigma)|=|\Omega_{n,r}(\pi)|=|T_r^n \pi \cap T_r^n|$, where $T_r^n \pi:=\{\rho\pi\,:\, \rho \in T_r^n\}$.
In particular, if $\pi \in T_k^n$ for some $k\in [n]$, then $|\Omega_{n,r}(\pi)|=|\Omega_{n,r}(\pi')|$ for some $\pi' \in S_k$. 
\end{lem}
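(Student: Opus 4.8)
The plan is to prove Lemma~\ref{lme6} by exploiting the interplay between the Hamming distance, right multiplication in $S_n$, and conjugation. First I would establish the identity $|\Omega_{n,r}(\pi)|=|B_r^n(\pi)\cap B_r^n(I_n)|$ directly from the definitions. Unwinding the definition of $\Omega_{n,r}(\pi)$, an element is a pair $(\sigma,\tau)\in T_r^n\times T_r^n$ with $\sigma\tau=\pi$; since $\tau=\sigma^{-1}\pi$ is determined by $\sigma$, the set $\Omega_{n,r}(\pi)$ is in bijection with the set of $\sigma\in T_r^n$ such that $\sigma^{-1}\pi\in T_r^n$. Now $\sigma\in T_r^n$ means $|M(\sigma)|\le r$, i.e.\ $d(\sigma,I_n)\le r$, so $\sigma\in B_r^n(I_n)$. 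Likewise $\sigma^{-1}\pi\in T_r^n$ means $|M(\sigma^{-1}\pi)|\le r$; since $d(\sigma,\pi)=|M(\sigma\pi^{-1})|$ and $|M(\rho)|=|M(\rho^{-1})|$ for any permutation $\rho$, one checks that this condition is equivalent to $d(\sigma,\pi)\le r$, i.e.\ $\sigma\in B_r^n(\pi)$. Thus the map $(\sigma,\tau)\mapsto\sigma$ is a bijection onto $B_r^n(\pi)\cap B_r^n(I_n)$, giving the first equality.

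Next I would prove $|\Omega_{n,r}(\pi)|=|T_r^n\pi\cap T_r^n|$, which follows from the same bijection: the condition ``$\sigma\in T_r^n$ and $\sigma^{-1}\pi\in T_r^n$'' can be rewritten by setting $\rho=\sigma^{-1}\pi$, so that $\sigma=\rho^{-1}\cdot\pi\cdots$; more cleanly, $\sigma\in T_r^n\cap T_r^n\pi$ is equivalent to $\sigma\in T_r^n$ and $\sigma\pi^{-1}\in T_r^n$, and since $T_r^n$ is closed under inversion (as $|M(\rho)|=|M(\rho^{-1})|$), this matches the counted set up to the symmetric substitution. For the conjugation-invariance $|\Omega_{n,r}(\pi^\sigma)|=|\Omega_{n,r}(\pi)|$, the key observation is that conjugation by $\sigma$ is an automorphism of $S_n$ preserving the set $T_r^n$, because $|M(\rho^\sigma)|=|M(\rho)|$ (conjugation permutes the ground set $[n]$ and hence preserves the number of non-fixed points). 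Therefore the map $(\alpha,\beta)\mapsto(\alpha^\sigma,\beta^\sigma)$ sends $\Omega_{n,r}(\pi)$ bijectively to $\Omega_{n,r}(\pi^\sigma)$, since $\alpha^\sigma\beta^\sigma=(\alpha\beta)^\sigma=\pi^\sigma$ and both conjugates remain in $T_r^n$.

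For the final ``in particular'' statement, suppose $\pi\in T_k^n$, so $|M(\pi)|\le k$ and $\pi$ moves at most $k$ points. I would choose a permutation $\sigma\in S_n$ that carries the support $M(\pi)$ into $[k]=\{1,\dots,k\}$; then $\pi^\sigma$ fixes every point outside $[k]$, so $\pi^\sigma\in S_k$ (viewing $S_k=S_{[k]}$ as permutations fixing $[n]\setminus[k]$ pointwise). Setting $\pi'=\pi^\sigma$ and invoking the conjugation-invariance just established yields $|\Omega_{n,r}(\pi)|=|\Omega_{n,r}(\pi')|$ with $\pi'\in S_k$, as required.

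The main obstacle I anticipate is not conceptual but bookkeeping: one must verify carefully that inversion and conjugation each preserve $|M(\cdot)|$, and that the distance condition $d(\sigma,\pi)\le r$ translates correctly under the substitutions, since the definition $d(\sigma,\pi)=|M(\sigma\pi^{-1})|$ mixes left and right multiplication and one must be consistent about which side the reconstruction variable lives on. Once the three elementary facts $|M(\rho)|=|M(\rho^{-1})|=|M(\rho^\sigma)|$ are in hand, all three bijections are routine; the cited \cite[Lemma 7]{WK} presumably supplies exactly this framework, so the proof reduces to recording these bijections explicitly in the paper's notation.
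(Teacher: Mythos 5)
Your proposal is correct and takes essentially the same approach as the paper: a projection bijection identifying $\Omega_{n,r}(\pi)$ with $T_r^n\pi\cap T_r^n=B_r^n(\pi)\cap B_r^n(I_n)$, the conjugation bijection $(\alpha,\beta)\mapsto(\alpha^\sigma,\beta^\sigma)$, and conjugating $\pi$ into $S_k$ for the final claim. The only cosmetic difference is that you project onto the first coordinate (note this step needs $|M(\alpha\beta)|=|M(\beta\alpha)|$, i.e.\ conjugation invariance, and not just $M(\theta)=M(\theta^{-1})$ --- a fact you do list among your ``three elementary facts''), whereas the paper uses the second-coordinate bijection $\tau\mapsto(\pi\tau^{-1},\tau)$ together with $B_r^n(\sigma)=T_r^n\sigma$.
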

\begin{proof}
The map defined from $\Omega_{n,r}(\pi)$ to $\Omega_{n,r}(\pi^\sigma)$ by $(\tau_1,\tau_2)\mapsto (\tau_1^\sigma,\tau_2^\sigma)$ is a bijection. The map defined from 
$T_r^n \pi \cap T_r^n$ to $\Omega_{n,r}(\pi)$ by $\tau \mapsto (\pi \tau^{-1},\tau)$ is a bijection, to see the latter we need the fact that $M(\theta)=M(\theta^{-1})$ for all $\theta \in S_n$. The proof of the first part is complete with the fact that $B_r^n(\sigma)=T_r^n \sigma$, for all $\sigma \in S_n$. 

The second part follows from the first equality of the first part and the fact that one can find a $\theta \in S_n$ such that $\pi^\theta \in S_k$ whenever $|M(\pi)|\leq k$. This completes the proof.
\end{proof}
\begin{rem}\label{rem}
Since for any permutation $\sigma$ with $|M(\sigma)| > 2r$ we have $\Omega_{n,r}(\sigma) = \varnothing$, and in view of \cite[Lemma 3]{WFK} and Lemma $\ref{lme6} $, it follows that if $\sigma_1,\dots,\sigma_\ell$ are representatives of those conjugacy classes in $S_n$ whose elements move at most $2r$ points, then 
$
N(n,r) = \max \bigl\{\, |\Omega_{n,r}(\sigma_i)| : 1 \leq i \leq \ell \,\bigr\}.
$ Moreover, for a fixed integer $d$ with $0 \leq d \leq 2r$, if $\sigma'_1,\dots,\sigma'_k$ are representatives of those conjugacy classes in $S_n$ whose elements move exactly $d$ points, then
$
I(n,d,r) = \max \bigl\{\, |\Omega_{n,r}(\sigma'_i)| : 1 \leq i \leq k \,\bigr\}$. In particular, $I(n,2,r) = |\Omega_{n,r}(\tau)|$, where $\tau$ is a transposition in $S_n$, i.e., a permutation that interchanges two elements and fixes all others.
\end{rem}
We will need the concept of group action in the next section, and we recall it here. 
A right action of a group $G$ on a set $X$ is a map
$
X \times G \to X, \quad (x,g) \mapsto x \cdot g,
$
such that $(x \cdot g)\cdot h = x \cdot (gh)$ for all $g,h \in G$ and $x \cdot I_n = x$ for all $x \in X$. 
For $x \in X$, the orbit of $x$ is
$
\operatorname{Orb}(x) = \{x \cdot g : g \in G\},
$
and the  stabilizer of $x$ is
$
\operatorname{Stab}_G(x) = \{g \in G : x \cdot g = x\}.
$
\section{Exact values $N(n,r)$ for $r = 5,6,7$}
In this section,  for a given integer $r\geq 2$, we prove several lemmas showing that for any $n \ge 2r$ and any $\sigma \in S_n$ with $|M(\sigma)| \le 2r$, the computation of $|\Omega_{n,r}(\sigma)|$ in $S_n$ can be reduced to the corresponding computation in $S_{2r}$. We then present an algorithm for determining $N(n,r)$, and use this algorithm to prove Theorem~\ref{th5}.
\begin{lem}\label{lem1}
Let $r\in [n]$ such that $2r\leq n$ and $\sigma \tau=\pi$, where $\sigma, \tau \in T_r^n$ and $\pi \in S_{2r}$. Then there exist $\sigma',\tau'\in T_r^{2r}$ such that $\sigma' \tau'=\pi$, where $\sigma'=\sigma^\delta$, $\tau'=\tau^\delta$ for some $\delta\in C_{S_n}(\pi)$. 
\end{lem}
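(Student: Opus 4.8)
The plan is to build the required $\delta$ explicitly by permuting only the points that $\pi$ fixes. First I would record the two elementary facts that make the reduction work. Since $\sigma\tau=\pi$, any point fixed by both $\sigma$ and $\tau$ is fixed by $\pi$, so $M(\pi)\subseteq M(\sigma)\cup M(\tau)=:U$, and because $|M(\sigma)|,|M(\tau)|\le r$ we get $|U|\le 2r$. Second, conjugation transports supports: a one-line check from $i^{\sigma^\delta}=\big((i^{\delta^{-1}})^\sigma\big)^\delta$ gives $M(\sigma^\delta)=M(\sigma)^\delta$ for every $\delta$, and in particular $|M(\sigma^\delta)|=|M(\sigma)|\le r$. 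Hence the desired memberships $\sigma^\delta,\tau^\delta\in T_r^{2r}$ reduce to the single condition $U^\delta\subseteq[2r]$, and the identity $\sigma^\delta\tau^\delta=(\sigma\tau)^\delta=\pi^\delta=\pi$ will follow for free once $\delta\in C_{S_n}(\pi)$. So it suffices to find $\delta\in C_{S_n}(\pi)$ with $U^\delta\subseteq[2r]$.

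The mechanism I would exploit is that the centralizer of $\pi$ contains the full symmetric group on its fixed-point set $\operatorname{Fix}(\pi):=[n]\setminus M(\pi)$: any permutation that permutes $\operatorname{Fix}(\pi)$ among itself and fixes $M(\pi)$ pointwise commutes with $\pi$, which I would verify by a short two-case computation on $M(\pi)$ and on $\operatorname{Fix}(\pi)$. The point that unlocks the lemma is that, since $\pi\in S_{2r}$, every index outside $[2r]$ is fixed by $\pi$, i.e.\ $[n]\setminus[2r]\subseteq\operatorname{Fix}(\pi)$; therefore the part of $U$ lying outside $[2r]$ consists entirely of fixed points of $\pi$, precisely the points such a $\delta$ is free to relocate.

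Now I would set $P:=[2r]\setminus M(\pi)$, the fixed points of $\pi$ inside $[2r]$, and $W:=U\cap\operatorname{Fix}(\pi)$. Using the partition $[n]=M(\pi)\sqcup\operatorname{Fix}(\pi)$ together with $M(\pi)\subseteq U$, we have $U=M(\pi)\sqcup W$, so $|W|=|U|-|M(\pi)|\le 2r-|M(\pi)|=|P|$. This cardinality bound is the crux, and I expect it to be the only real obstacle: it says there is exactly enough room among the fixed points of $\pi$ inside $[2r]$ to absorb all of $W$, and it rests entirely on $|M(\sigma)|+|M(\tau)|\le 2r$ combined with $M(\pi)\subseteq U$. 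Given it, I would choose any injection $W\hookrightarrow P$ and extend it to a permutation $\delta$ of $\operatorname{Fix}(\pi)$ that fixes $M(\pi)$ pointwise; by the previous paragraph $\delta\in C_{S_n}(\pi)$.

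Finally I would verify the conclusion. Since $\delta$ fixes $M(\pi)$ and sends $W$ into $P$, we get $U^\delta=M(\pi)^\delta\cup W^\delta\subseteq M(\pi)\cup P=[2r]$, whence $M(\sigma^\delta)=M(\sigma)^\delta\subseteq U^\delta\subseteq[2r]$ and likewise $M(\tau^\delta)\subseteq[2r]$. Thus $\sigma':=\sigma^\delta$ and $\tau':=\tau^\delta$ both fix $[n]\setminus[2r]$ and move at most $r$ points, so $\sigma',\tau'\in T_r^{2r}$; and $\sigma'\tau'=(\sigma\tau)^\delta=\pi^\delta=\pi$ because $\delta$ centralizes $\pi$, completing the argument. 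Everything past the counting bound is routine bookkeeping.
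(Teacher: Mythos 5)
Your proof is correct and follows essentially the same route as the paper's: both construct a conjugating permutation $\delta$ that fixes $M(\pi)$ pointwise (hence lies in $C_{S_n}(\pi)$) and transports the stray points of $M(\sigma)\cup M(\tau)$ into $[2r]$, with the same counting bound $|M(\sigma)\cup M(\tau)|\le 2r$ doing the real work. The only cosmetic difference is how $\delta$ is built: the paper swaps $\Delta'=(M(\sigma)\cup M(\tau))\setminus[2r]$ with a fresh subset $\Lambda\subseteq[2r]$ of equal size, whereas you extend an injection of $(M(\sigma)\cup M(\tau))\cap\operatorname{Fix}(\pi)$ into $[2r]\setminus M(\pi)$ to a permutation of the fixed-point set.
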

\begin{proof}
Let $\Delta := (M(\sigma)\cup M(\tau))\cap [2r]$ and $\Delta' := (M(\sigma)\cup M(\tau))\setminus \Delta$. Since $\sigma,\tau\in T_r^n$, we have $|M(\sigma)\cup M(\tau)|\le 2r$, and hence $f:=|\Delta'|\le 2r-|\Delta|$. Note that if $f=0$, then there is nothing to prove; hence, we may assume that $f>0$. Choose a subset $\Lambda\subseteq [2r]\setminus \Delta$ such that $|\Lambda|=f$. Choose a permutation $\delta\in S_n$ such that $\delta(x)\in\Lambda$ for all $x\in\Delta'$, $\delta(x)\in\Delta'$ for all $x\in\Lambda$, and $x^\delta=x$ for all $x\in [n]\setminus(\Delta'\cup\Lambda)$. Such a permutation exists since $|\Delta'|=|\Lambda|$. Conjugating the equality $\sigma\tau=\pi$ by $\delta$, we obtain $\sigma^\delta\tau^\delta=\pi^\delta$. Since $M(\pi)\subseteq \Delta$ and $\delta$ acts trivially on $\Delta$, it follows that $\pi^\delta=\pi$, and hence $\delta\in C_{S_n}(\pi)$. By construction, all points in $M(\sigma^\delta)$ and $M(\tau^\delta)$ lie in $[2r]$. Moreover, since $|M(\sigma^\delta)|=|M(\sigma)|$ and $|M(\tau^\delta)|=|M(\tau)|$, $\sigma^\delta,\tau^\delta\in T_r^{2r}$. Setting $\sigma'=\sigma^\delta$ and $\tau'=\tau^\delta$ completes the proof.
\end{proof}
The following example demonstrates the construction described in Lemma~\ref{lem1}.
\begin{exm}
Let $n=12$ and $r=5$. Consider the permutation $\pi=(1\ 3\ 8\ 6)\in S_{10}$. Let $\sigma=(1\ 3\ 8\ 11\ 12)$ and $\tau =(12\ 11\ 6\ 1)$ in $S_{12}$. Clearly,   $\sigma\tau =\pi$, $\Delta := (M(\sigma)\cup M(\tau))\cap [2r]=\{1,3,6,8\}$ and $\Delta' := (M(\sigma)\cup M(\tau))\setminus \Delta=\{11,12\}$. We choose $\Lambda=\{2,4\}$ and define $\delta=(11\ 2\ 12\ 4)\in S_{12}$ so that the conditions on $\Lambda$ and $\delta$ stated in the proof of Lemma~\ref{lem1} are satisfied. It is easy to see that $\delta\in C_{12}(\pi)$ and if   $\sigma'=\sigma^\delta=(1\ 3\ 8\ 2\ 4)$ and  $\tau'=\tau^\delta=(4\ 2\ 6\ 1)$, then $\sigma'$ and $\tau' $ are two permutation in $T_{5}^{10}$ such that  $\sigma' \tau'=\pi$. 
\end{exm}
For any $\pi\in S_n$, the centralizer $C_{S_n}(\pi)$ acts by conjugation on the set $\Omega_{n,r}(\pi)$ as follows:
$$(\sigma,\tau)\cdot \delta:=(\sigma^\delta,\tau^\delta)$$
for all $\delta \in C_{S_n}(\pi)$ and all $(\sigma,\tau)\in \Omega_{n,r}(\pi)$: for it follows from 
 $\sigma \tau =\pi$ that $\sigma^\delta \tau^\delta =\pi^\delta=\pi$; and since $|M(\eta)|=|M(\eta^\theta))|$ for all $\eta,\theta \in S_n$, we have $(\sigma^\delta,\tau^\delta)\in \Omega_{n,r}(\pi)$.

\begin{lem} \label{lem2}
Let $r\in [n]$ such that $2r\leq n$ and  $\pi \in S_{2r}$. Then   the number of orbits of the action by conjugation of $C_{S_n}(\pi)$ on $\Omega_{n,r}(\pi)$ is equal to the one of the action of $C_{S_{2r}}(\pi)$ by conjugation on $\Omega_{2r,r}(\pi)$. 
\end{lem}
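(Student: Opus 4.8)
The plan is to show that the inclusions $C_{S_{2r}}(\pi)\le C_{S_n}(\pi)$ and $\Omega_{2r,r}(\pi)\subseteq\Omega_{n,r}(\pi)$ induce a bijection between the two sets of orbits. Both inclusions are legitimate because $\pi\in S_{2r}$ fixes every point of $W:=[n]\setminus[2r]$, so $S_{2r}$ sits inside $S_n$ as permutations fixing $W$ pointwise. Concretely, I would send each $C_{S_{2r}}(\pi)$-orbit $O\subseteq\Omega_{2r,r}(\pi)$ to the unique $C_{S_n}(\pi)$-orbit of $\Omega_{n,r}(\pi)$ that contains it; this is well defined since any two elements of $O$ differ by an element of $C_{S_{2r}}(\pi)\subseteq C_{S_n}(\pi)$ and hence lie in a single $C_{S_n}(\pi)$-orbit. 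It then remains to prove that this map is surjective and injective.

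Surjectivity is immediate from Lemma~\ref{lem1}: given any $(\sigma,\tau)\in\Omega_{n,r}(\pi)$, the lemma produces $\delta\in C_{S_n}(\pi)$ with $(\sigma^\delta,\tau^\delta)\in\Omega_{2r,r}(\pi)$, so every $C_{S_n}(\pi)$-orbit meets $\Omega_{2r,r}(\pi)$ and is therefore the image of a $C_{S_{2r}}(\pi)$-orbit.

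The heart of the argument, and the step I expect to be the main obstacle, is injectivity: if $(\sigma_1,\tau_1),(\sigma_2,\tau_2)\in\Omega_{2r,r}(\pi)$ lie in a common $C_{S_n}(\pi)$-orbit, they must already lie in a common $C_{S_{2r}}(\pi)$-orbit. Fix $\delta\in C_{S_n}(\pi)$ with $\sigma_1^\delta=\sigma_2$ and $\tau_1^\delta=\tau_2$; the difficulty is that $\delta$ may move points between $[2r]$ and $W$, so $\delta\notin S_{2r}$ in general. My strategy is to correct $\delta$ by multiplying on the left by $c^{-1}$ for a suitable $c\in C_{S_n}(\sigma_1)\cap C_{S_n}(\tau_1)$. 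Any such $c$ satisfies $\sigma_1^{c^{-1}}=\sigma_1$ and $\tau_1^{c^{-1}}=\tau_1$, and moreover $c\in C_{S_n}(\pi)$ because $C_{S_n}(\sigma_1)\cap C_{S_n}(\tau_1)\subseteq C_{S_n}(\sigma_1\tau_1)=C_{S_n}(\pi)$. Hence $\delta':=c^{-1}\delta$ still satisfies $\sigma_1^{\delta'}=(\sigma_1^{c^{-1}})^\delta=\sigma_2$ and $\tau_1^{\delta'}=\tau_2$, and still centralizes $\pi$; the only remaining requirement is that $c$ be chosen so that $\delta'$ fixes $W$ pointwise, which forces $\delta'\in C_{S_{2r}}(\pi)$.

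The construction of $c$ is where the hypotheses enter. Set $S_1:=M(\sigma_1)\cup M(\tau_1)\subseteq[2r]$ and $Y:=[n]\setminus S_1$; every point of $Y$ is fixed by both $\sigma_1$ and $\tau_1$, so $S_Y\subseteq C_{S_n}(\sigma_1)\cap C_{S_n}(\tau_1)$. Since $M(\sigma_2)\cup M(\tau_2)\subseteq[2r]$ and conjugation by $\delta$ carries $M(\sigma_1)$ to $M(\sigma_2)$ and $M(\tau_1)$ to $M(\tau_2)$, the map $\delta$ sends $S_1$ into $[2r]$; consequently $\delta^{-1}(W)\cap S_1=\varnothing$, so $\delta^{-1}$ maps $W$ injectively into $Y$ (note $W\subseteq Y$). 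I would then extend the injection $\delta^{-1}|_{W}\colon W\hookrightarrow Y$ to a permutation of $Y$ (possible since $Y$ is finite and $W\subseteq Y$), let $c^{-1}$ equal this permutation on $Y$ and the identity on $S_1$, so that $c\in S_Y$ and $c^{-1}$ agrees with $\delta^{-1}$ on $W$. A direct check then gives $w^{\delta'}=(w^{c^{-1}})^\delta=(w^{\delta^{-1}})^\delta=w$ for all $w\in W$, so $\delta'\in S_{2r}$; together with the previous paragraph this yields $\delta'\in C_{S_{2r}}(\pi)$ with $(\sigma_1,\tau_1)\cdot\delta'=(\sigma_2,\tau_2)$. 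This establishes injectivity, so the map is a bijection and the two orbit counts coincide.
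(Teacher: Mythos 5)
Your proposal is correct and follows essentially the same strategy as the paper: both prove that the inclusion-induced map on orbit sets is a bijection, with surjectivity coming from Lemma~\ref{lem1} and injectivity by replacing the given conjugator in $C_{S_n}(\pi)$ with one lying in $C_{S_{2r}}(\pi)$. The only cosmetic difference is in how that replacement is produced: the paper glues the restriction of $\lambda$ to $M(\sigma)\cup M(\tau)$ with an arbitrary bijection $\Theta\to\Theta'$ between the fixed-point sets inside $[2r]$, whereas you multiply $\delta$ by a correcting element of the pointwise stabilizer $S_Y$ chosen to undo $\delta$ on $[n]\setminus[2r]$ --- the resulting permutation has exactly the same form.
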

\begin{proof}
	It follows from Lemma~\ref{lem1} that every orbit of the conjugation action of $C_{S_n}(\pi)$ on $\Omega_{n,r}(\pi)$ contains at least one element of $\Omega_{2r,r}(\pi)$. Hence, the number of orbits of the conjugation action of $C_{S_{2r}}(\pi)$ on $\Omega_{2r,r}(\pi)$ is at least as large as the number of orbits of the conjugation action of $C_{S_n}(\pi)$ on $\Omega_{n,r}(\pi)$.
Now suppose that $(\sigma,\tau)$ and $(\sigma',\tau')$ are two distinct elements of $\Omega_{2r,r}(\pi)$ that belong to the same orbit under the action of $C_{S_n}(\pi)$ on $\Omega_{n,r}(\pi)$. Then there exists $\lambda \in C_{S_n}(\pi)$ such that $(\sigma^\lambda,\tau^\lambda) = (\sigma',\tau')$.
Let $\Theta := [2r] \setminus (M(\sigma) \cup M(\tau))$ and $\Theta' := [2r] \setminus (M(\sigma') \cup M(\tau'))$. Since $|\Theta| = |\Theta'|$, there exists a bijection $\rho : \Theta \to \Theta'$. Define the permutation $\delta \in S_{2r}$ by setting $i^\delta := i^\rho$ for $i \in \Theta$, and $i^\delta := i^\lambda$ for $i \in [2r] \setminus \Theta$. Clearly, $\pi^\delta=(\sigma\tau)^\delta=\sigma^\delta\tau^\delta=\sigma'\tau'=\pi$ and therefore $\delta \in C_{S_{2r}}(\pi)$, which completes the proof.
\end{proof}
\begin{lem}\label{lem3}
Let $r\in [n]$ be such that $2r\leq n$, and let $\pi \in S_{2r}$. Suppose that  $\{(\sigma_1,\tau_1),\dots,(\sigma_s,\tau_s)\}$ is a complete set of  representatives of orbits of the action by conjugation of $C_{S_{2r}}(\pi)$ on $\Omega_{2r,r}(\pi)$. Then 
$$|\Omega_{n,r}(\pi)|=\sum_{i=1}^s \frac{|C_{S_n}(\pi)|}{|C_{S_n}(\pi) \cap C_{S_n}(\sigma_i)|}.$$
\end{lem}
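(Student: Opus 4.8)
The plan is to realise $|\Omega_{n,r}(\pi)|$ as the sum of the sizes of the orbits of the conjugation action of $G := C_{S_n}(\pi)$ on $\Omega_{n,r}(\pi)$, and then to evaluate each orbit size by the orbit--stabilizer theorem. Since the orbits of any group action partition the underlying set, once I know a complete and irredundant set of orbit representatives $(\rho_1,\mu_1),\dots,(\rho_t,\mu_t)$ I may write $|\Omega_{n,r}(\pi)| = \sum_{i=1}^t |\operatorname{Orb}((\rho_i,\mu_i))| = \sum_{i=1}^t |G|/|\operatorname{Stab}_G((\rho_i,\mu_i))|$. The whole proof therefore reduces to two things: identifying a good set of representatives, and computing the stabilizers.

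First I would show that the given pairs $(\sigma_1,\tau_1),\dots,(\sigma_s,\tau_s)$ --- which are representatives of the $C_{S_{2r}}(\pi)$-orbits on $\Omega_{2r,r}(\pi)$ --- already form a complete set of representatives of the $G$-orbits on $\Omega_{n,r}(\pi)$. Since $C_{S_{2r}}(\pi)\subseteq G$, each $C_{S_{2r}}(\pi)$-orbit sits inside a unique $G$-orbit, giving a well-defined map from the former orbit set to the latter. Lemma~\ref{lem1} guarantees that every $G$-orbit on $\Omega_{n,r}(\pi)$ meets $\Omega_{2r,r}(\pi)$, so this map is surjective; and Lemma~\ref{lem2} tells us both orbit sets have the same cardinality $s$. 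A surjection between finite sets of equal size is a bijection, so each $G$-orbit contains exactly one of the $C_{S_{2r}}(\pi)$-orbits, and hence the $(\sigma_i,\tau_i)$ lie in distinct $G$-orbits and exhaust them.

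It then remains to identify the stabilizer $\operatorname{Stab}_G((\sigma_i,\tau_i))$. By definition $\delta\in G$ fixes $(\sigma_i,\tau_i)$ if and only if $\sigma_i^\delta=\sigma_i$ and $\tau_i^\delta=\tau_i$, that is, $\delta\in G\cap C_{S_n}(\sigma_i)\cap C_{S_n}(\tau_i)$. The key simplification is that the two centralizer conditions are not independent: because $\sigma_i\tau_i=\pi$ and $\delta\in G$ already satisfies $\pi^\delta=\pi$, the relation $\tau_i=\sigma_i^{-1}\pi$ forces $\tau_i^\delta=(\sigma_i^\delta)^{-1}\pi^\delta=\sigma_i^{-1}\pi=\tau_i$ as soon as $\sigma_i^\delta=\sigma_i$. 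Hence $\operatorname{Stab}_G((\sigma_i,\tau_i))=G\cap C_{S_n}(\sigma_i)=C_{S_n}(\pi)\cap C_{S_n}(\sigma_i)$, and substituting this into the orbit--stabilizer expression yields the claimed formula.

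The only delicate point is the first step, establishing that the prescribed pairs are genuinely a transversal for the $G$-orbits: it is conceivable a priori that two of the $(\sigma_i,\tau_i)$, inequivalent under $C_{S_{2r}}(\pi)$, become conjugate under the larger group $G$, or that some $G$-orbit avoids $\Omega_{2r,r}(\pi)$ altogether. Both possibilities are exactly what Lemmas~\ref{lem1} and~\ref{lem2} rule out, so the argument hinges on quoting them in the right combination rather than on any further computation.
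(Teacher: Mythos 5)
Your proof is correct and follows essentially the same route as the paper's: orbit decomposition under $C_{S_n}(\pi)$, the orbit--stabilizer theorem, identification of the stabilizer as $C_{S_n}(\pi)\cap C_{S_n}(\sigma_i)$ via the relation $\sigma_i\tau_i=\pi$, and Lemmas~\ref{lem1} and~\ref{lem2} to certify that the $(\sigma_i,\tau_i)$ form a transversal of the $C_{S_n}(\pi)$-orbits. If anything, you spell out the transversal argument (surjection between equinumerous finite orbit sets) more explicitly than the paper, which compresses it into ``Now Lemma~\ref{lem2} completes the proof.''
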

\begin{proof}
	It follows from the fact that the underlying set of an action  is partitioned by the orbits and the size of the orbit of $(\sigma,\tau)\in \Omega_{n,r}(\pi)$ is equal to $|C_{S_n}(\pi):{\rm Stab}_{C_{S_n}(\pi)}((\sigma,\tau))|$. It is easy to see that ${\rm Stab}_{C_{S_n}(\pi)}((\sigma,\tau))=C_{S_n}(\sigma) \cap C_{S_n}(\tau)$. Since $\sigma \tau=\pi$, it follows that $C_{S_n}(\sigma) \cap C_{S_n}(\tau)=C_{S_n}(\sigma) \cap C_{S_n}(\pi)$. Now Lemma \ref{lem2} completes the proof. 
\end{proof}
We denote by $C_{S_A}(\sigma,\pi)$ the intersection of two centralizers $C_{S_A}(\sigma)$ and $C_{S_A}(\pi)$ that is $C_{S_A}(\sigma) \cap C_{S_A}(\pi)$.
\begin{lem}\label{lem4}
	For any two permutations $\sigma$ and $\pi$ in $S_n$, $$C_{S_n}(\sigma, \pi)=C_{S_{M(\sigma)\cup M(\pi)}}(\sigma,\pi) \times 
		S_{[n] \setminus \big( M(\sigma) \cup M(\pi) \big)}.$$
\end{lem}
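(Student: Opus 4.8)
The plan is to prove the two inclusions separately after isolating the relevant support set. Write $U := M(\sigma)\cup M(\pi)$ and $F := [n]\setminus U$, so that $\sigma$ and $\pi$ both fix every point of $F$, and the claimed right-hand side is $C_{S_U}(\sigma,\pi)\times S_F$. I would first record that $S_U$ and $S_F$ commute elementwise and intersect only in $I_n$, since they are supported on the disjoint sets $U$ and $F$; consequently the internal product $C_{S_U}(\sigma,\pi)\cdot S_F$ really is a (direct) product of subgroups, and the task reduces to showing this product equals $C_{S_n}(\sigma,\pi)$.

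For the inclusion $\supseteq$, any $\delta\in S_F$ has support disjoint from $M(\sigma)$ and from $M(\pi)$ and hence commutes with both $\sigma$ and $\pi$, so $S_F\subseteq C_{S_n}(\sigma,\pi)$; and $C_{S_U}(\sigma,\pi)\subseteq C_{S_n}(\sigma,\pi)$ is immediate. Since the two factors commute, any product of them again lies in $C_{S_n}(\sigma,\pi)$.

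For the inclusion $\subseteq$, the key observation — and essentially the only step needing care — is that every $\delta\in C_{S_n}(\sigma,\pi)$ stabilizes $U$ setwise. I would first prove the general support identity $M(\theta^\delta)=M(\theta)^\delta$ (the image of the support under $\delta$), which follows from the right-action convention by the chain $i\in M(\theta^\delta)\iff i^{\delta^{-1}}\in M(\theta)\iff i\in M(\theta)^\delta$, using injectivity of $\delta$. Applying this with $\theta=\sigma$ and $\theta=\pi$, and using $\sigma^\delta=\sigma$, $\pi^\delta=\pi$, gives $M(\sigma)^\delta=M(\sigma)$ and $M(\pi)^\delta=M(\pi)$; hence $\delta$ fixes $U=M(\sigma)\cup M(\pi)$, and therefore $F$, setwise.

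Granting this, I would split $\delta$ along the two blocks: let $\delta_U$ agree with $\delta$ on $U$ and fix $F$, and let $\delta_F$ agree with $\delta$ on $F$ and fix $U$. Because $\delta$ preserves each block, both are well-defined with $\delta_U\in S_U$, $\delta_F\in S_F$, and $\delta=\delta_U\delta_F$. Finally I would verify $\delta_U\in C_{S_U}(\sigma,\pi)$: since $\delta_F\in S_F$ commutes with $\sigma$, cancelling $\delta_F$ from $\sigma\delta=\delta\sigma$ yields $\sigma\delta_U=\delta_U\sigma$, and symmetrically $\pi\delta_U=\delta_U\pi$, so $\delta_U$ commutes with both $\sigma$ and $\pi$. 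Thus $\delta=\delta_U\delta_F$ lies in $C_{S_U}(\sigma,\pi)\cdot S_F$, completing the inclusion and the proof. The main obstacle is nothing more than the bookkeeping in the support identity $M(\theta^\delta)=M(\theta)^\delta$ under the paper's right-action convention; once it is established, the block decomposition of $\delta$ and the commutation checks are routine.
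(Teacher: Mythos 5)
Your proof is correct, and its overall skeleton matches the paper's: both arguments reduce the lemma to showing that every element of $C_{S_n}(\sigma,\pi)$ stabilizes $U = M(\sigma)\cup M(\pi)$ setwise, after which the block factorization $\delta=\delta_U\delta_F$ gives the direct product. Where you genuinely differ is in the mechanism for that key step. The paper proves setwise invariance by a pointwise contradiction: assuming $i\in M(\sigma)$ with $i^{\tau}=j\notin U$, it sets $k:=i^{\sigma}\neq i$ and uses $i^{\sigma\tau}=i^{\tau\sigma}$ to get $k^{\tau}=j^{\sigma}=j=i^{\tau}$, contradicting injectivity of $\tau$. You instead invoke the general conjugation--support identity $M(\theta^{\delta})=M(\theta)^{\delta}$ together with $\sigma^{\delta}=\sigma$ and $\pi^{\delta}=\pi$; this is slightly stronger (it shows $\delta$ stabilizes $M(\sigma)$ and $M(\pi)$ individually, not just their union) and more reusable --- it is the same fact that underlies the paper's earlier remark that $|M(\eta)|=|M(\eta^{\theta})|$ when setting up the conjugation action on $\Omega_{n,r}(\pi)$. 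The paper's argument is more elementary and self-contained; yours is more conceptual, and you also write out explicitly what the paper compresses into ``It suffices to prove that\dots'', namely the easy inclusion $\supseteq$ and the verification that $\delta_U$ lies in $C_{S_U}(\sigma,\pi)$. Both proofs are complete and correct; yours is the more detailed write-up of essentially the same reduction, with a cleaner proof of its crux.
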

\begin{proof}
It suffices to prove that for each $\tau \in C_{S_n}(\sigma, \pi)$ and each $i \in M(\sigma) \cup M(\pi)$, we have $i^\tau \in M(\sigma) \cup M(\pi)$. 
Suppose  that $i^\tau = j \in [n] \setminus \big( M(\sigma) \cup M(\pi) \big)$. Without loss of generality, we may assume that  $i\in M(\sigma)$.
Since $\tau \in C_{S_n}(\sigma)$, we have $i^{\sigma\tau} = i^{\tau\sigma}$. 
Suppose that $i^\sigma = k \in M(\sigma) \setminus \{i\}$. 
Then we would have $i^{\tau} = k^\tau = j$, which is a contradiction. 
This completes the proof.
\end{proof}
\begin{lem}\label{lem5}
Let $r\in [n]$ such that $2r\leq n$ and $\pi \in S_{2r}$. Suppose that  $\{(\sigma_1,\tau_1),\dots,$ $(\sigma_s,\tau_s)\}$ is a complete set of  representatives of orbits of the action by conjugation of $C_{S_{2r}}(\pi)$ on $\Omega_{2r,r}(\pi)$. Then 
\begin{equation}\label{rel1}
|\Omega_{n,r}(\pi)|=\sum_{i=1}^s \frac{|C_{S_{M(\pi)}}(\pi)|}{|C_{S_{M(\pi)\cup M(\sigma_i)}}(\pi,\sigma_i)|}\cdot \frac{\big(n-|M(\pi)|\big)!}{\big(n-|M(\pi)\cup M(\sigma_i)|\big)!}.
\end{equation}
\end{lem}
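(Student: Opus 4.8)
The plan is to combine Lemma~\ref{lem3} with Lemma~\ref{lem4}, converting each index term in the sum into an explicit factorial expression. Starting from Lemma~\ref{lem3}, we already have
\[
|\Omega_{n,r}(\pi)|=\sum_{i=1}^s \frac{|C_{S_n}(\pi)|}{|C_{S_n}(\pi)\cap C_{S_n}(\sigma_i)|},
\]
so it suffices to evaluate each factor in the large symmetric group $S_n$ and then extract the dependence on $n$. First I would apply Lemma~\ref{lem4} to the denominator: since $C_{S_n}(\pi)\cap C_{S_n}(\sigma_i)=C_{S_n}(\pi,\sigma_i)$ and $\sigma_i\tau_i=\pi$ forces $M(\pi)\subseteq M(\sigma_i)\cup M(\tau_i)$, Lemma~\ref{lem4} gives the direct product decomposition
\[
C_{S_n}(\pi,\sigma_i)=C_{S_{M(\pi)\cup M(\sigma_i)}}(\pi,\sigma_i)\times S_{[n]\setminus(M(\pi)\cup M(\sigma_i))},
\]
whose order is therefore $|C_{S_{M(\pi)\cup M(\sigma_i)}}(\pi,\sigma_i)|\cdot(n-|M(\pi)\cup M(\sigma_i)|)!$.

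Next I would handle the numerator $|C_{S_n}(\pi)|$. Applying Lemma~\ref{lem4} with the single permutation $\pi$ (formally taking $\sigma=\pi$, so that $M(\sigma)\cup M(\pi)=M(\pi)$) yields
\[
C_{S_n}(\pi)=C_{S_{M(\pi)}}(\pi)\times S_{[n]\setminus M(\pi)},
\]
so $|C_{S_n}(\pi)|=|C_{S_{M(\pi)}}(\pi)|\cdot(n-|M(\pi)|)!$. Substituting both expressions into the ratio, the factors $|C_{S_{M(\pi)}}(\pi)|$ and $|C_{S_{M(\pi)\cup M(\sigma_i)}}(\pi,\sigma_i)|$ remain as written in~\eqref{rel1}, while the two surviving factorials combine into the falling-factorial factor
\[
\frac{(n-|M(\pi)|)!}{(n-|M(\pi)\cup M(\sigma_i)|)!},
\]
which is exactly the second factor in~\eqref{rel1}. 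Summing over $i$ then reproduces the claimed identity term by term.

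The one point that needs genuine care—and which I expect to be the main obstacle—is justifying the hypothesis $M(\pi)\subseteq M(\sigma_i)$ (or more precisely, that the relevant point set governing the denominator is $M(\pi)\cup M(\sigma_i)$ rather than $M(\pi)\cup M(\sigma_i)\cup M(\tau_i)$). Here I would use that $\pi=\sigma_i\tau_i$ with $\sigma_i,\tau_i\in T_r^{2r}$, so that any point fixed by both $\sigma_i$ and $\tau_i$ is fixed by $\pi$; contrapositively $M(\pi)\subseteq M(\sigma_i)\cup M(\tau_i)$, and one checks that the centralizer intersection $C_{S_n}(\sigma_i)\cap C_{S_n}(\tau_i)=C_{S_n}(\sigma_i)\cap C_{S_n}(\pi)$ (already observed in the proof of Lemma~\ref{lem3}) means the relevant support set is $M(\pi)\cup M(\sigma_i)$. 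Once this bookkeeping of supports is settled, the rest is a direct substitution, and the result follows.
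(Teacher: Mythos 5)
Your proof is correct and follows essentially the same route as the paper: start from Lemma~\ref{lem3}, then apply Lemma~\ref{lem4} twice, once to the denominator $C_{S_n}(\pi,\sigma_i)$ and once, degenerately, to the numerator $C_{S_n}(\pi)$ (you take $\sigma=\pi$, the paper writes $C_{S_n}(\pi)=C_{S_n}(\pi,I_n)$ — the same trick). Your closing worry about $M(\tau_i)$ is unnecessary, since Lemma~\ref{lem3} already places only $C_{S_n}(\pi)\cap C_{S_n}(\sigma_i)$ in the denominator, so Lemma~\ref{lem4} applies directly to the pair $(\pi,\sigma_i)$; but you resolve it correctly, so this costs nothing.
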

\begin{proof}
Since $M(I_n)=\varnothing$ and 	 by Lemma \ref{lem4}, $$C_{S_n}(\pi)=C_{S_n}(\pi, I_n)=C_{S_{M(\sigma) \cup \varnothing}}(\pi) \times S_{[n]\setminus \big(M(\sigma) \cup \varnothing \big)}=C_{S_{M(\sigma)}}(\pi) \times S_{[n]\setminus M(\sigma)}.$$ 
	Hence the proof follows from Lemmas \ref{lem3} and \ref{lem4}. 
\end{proof}

In view of the above results, if $n \geq 2r$, the value of $N(n,r)$ can be obtained from Remark \ref{rem} together with the relation (\ref{rel1}).  
If $r \leq n \leq 2r-1$, then $N(n,r)$ is directly obtained from 
$
   N(n,r) = \max \bigl\{\, |\Omega_{n,r}(\sigma_i)| : 1 \leq i \leq \ell \,\bigr\},
$
where $\sigma_1, \ldots, \sigma_\ell$ denote a complete set of representatives of the conjugacy classes of $S_n$.  
Based on these observations, we present Algorithm~1 for computing $N(n,r)$ for a given $r\leq n$.

\begin{algorithm}
		\caption{Computation of $N(n,r)$ for a given $r\leq n$}\label{algo}
		\begin{algorithmic}[1]

			\Input  $r\geq 2$ 
			\Output $N(n,r)$
			\State compute $T_r^{m}$ for each $m\in [2r]\setminus [r-1]$
			\State compute a complete set $R_m$ of representatives of non-trivial conjugacy classes of elements of $S_{m}$ for each $m\in [2r]\setminus [r-1]$
			\State compute $\Omega_{m,r}(\pi)$ for each $\pi\in R_m$ and all $m\in [2r]\setminus [r-1]$
			\State compute the set $\mathcal{O}_r$  consisting of the orbits $O_{\pi,r}$ of the action of $C_{S_{2r}}(\pi)$ on $\Omega_{2r,r}(\pi)$ for each $\pi\in R_{2r}$
			\State compute a set $\mathcal{RO}_r$ consisting of sets $RO_{\pi,r}$ of representatives of $O_{\pi,r}$ for each $\pi \in R_{2r}$
			\State compute the set of polynomials (in $n$) $|\Omega_{n,r}(\pi)|=\sum_{(\sigma,\tau) \in RO_{\pi,r}} \frac{|C_{S_{M(\pi)}}(\pi)|}{|C_{S_{M(\pi)\cup M(\sigma)}}(\pi,\sigma)|}\cdot \frac{\big(n-|M(\pi)|\big)!}{\big(n-|M(\pi)\cup M(\sigma)|\big)!}$ for each $RO_{\pi,r}\in \mathcal{RO}_r$.
			\State  Now $N(n,r)=\begin{cases}
			 \max\{|\Omega_{n,r}(\pi)| \; : \; \pi \in R_{2r}\} & {\rm if} \; n\geq 2r \\     \max\{|\Omega_{n,r}(\pi)| \; : \; \pi \in R_n\} & {\rm if} \; r\leq n\leq 2r-1. \end{cases}$
			\end{algorithmic}

	\end{algorithm}
The \textsf{GAP}~\cite{GAP} program corresponding to Algorithm~1 is given in  Appendix \ref{appen}. Also, in Table~\ref{T1}, we present the results obtained from Algorithm~1 
for $r=5,6,7$ and $r \leq n \leq 2r-1$, where each row corresponds to a conjugacy 
class of $S_n$ represented by its cycle type, together with the associated value 
of $|\Omega_{n,r}(\sigma)|$ for a representative $\sigma$ of that class. 
It is worth noting that $N(n,n)$ is clearly equal to $n!$, and this value is also included in the table for completeness. We are now ready to prove Theorem \ref{th5}.
\vspace*{.5cm}

\noindent\textbf{Proof of Theorem \ref{th5}}:
We consider \(r = 5, 6,\) or \(7\). Based on the values of \(N(n,r)\) obtained from Algorithm~1, which are listed in Table \ref{T1}, it can be verified that for \(n = r, r+1, \ldots, 2r - 1\), the corresponding values of \(N(n,r)\) satisfy the releations in parts (i), (ii), and (iii) for \(r = 5, 6, 7\), respectively.

Now assume that \(n \geq 2r\). In this case, the number of conjugacy classes in \(S_n\) whose elements move at most \(2r\) points equals the number of conjugacy classes in \(S_{2r}\), which is exactly 41, 77, and 135 for \(r=5, 6,\) and \(7\), respectively. 
By running Algorithm~1 in \textsf{GAP}~\cite{GAP}, we obtain a list of 41, 77, and 135 polynomial functions in \(n\) for \(r=5,6,7\), respectively, where each function represents the value of \(|\Omega_{n,r}(\sigma)|\) for some conjugacy class representative \(\sigma\). The \textsf{GAP} code and the resulting functions are provided in Appendix \ref{appen}.

To identify which of these functions yields the maximum value, we import the list into \textsf{MATLAB}~\cite{MATLAB} and use the ``isAlways'' command to perform symbolic comparisons. These comparisons show that, for \(n\geq 2r\), the function corresponding to transpositions provides the largest value.
Therefore, the formulas in parts (i), (ii), and (iii) hold for all \(n \geq r\), completing the proof.
\qed
\begin{table}
	\centering
	\caption{\small{Computation of $N(n,r)$ using Algorithm 1 for  $r\in\{5,6,7\}$ and $n\in[2r]-[r-1]$}}\label{T1}
	\label{tab:part1}
	\fontsize{5.5}{6.5}\selectfont
	\setlength{\tabcolsep}{0.8pt}
	\begin{tabular}{@{}l GWGWGWGWGWGWGWGWGW@{}}
		\toprule

		\multirow{2}{*}{\footnotesize{Cycle Type}} & \multicolumn{5}{c}{$r=5$} & \multicolumn{6}{c}{$r=6$} & \multicolumn{7}{c}{$r=7$} \\
		\cmidrule(lr){2-6} \cmidrule(lr){7-12} \cmidrule(lr){13-19}
		& \rotatebox{90}{(5,5)} & \rotatebox{90}{(6,5)} & \rotatebox{90}{(7,5)} & \rotatebox{90}{(8,5)} & \rotatebox{90}{(9,5)} & \rotatebox{90}{(6,6)} & \rotatebox{90}{(7,6)} & \rotatebox{90}{(8,6)} & \rotatebox{90}{(9,6)} & \rotatebox{90}{(10,6)} & \rotatebox{90}{(11,6)} & \rotatebox{90}{(7,7)} & \rotatebox{90}{(8,7)} & \rotatebox{90}{(9,7)} & \rotatebox{90}{(10,7)} & \rotatebox{90}{(11,7)} & \rotatebox{90}{(12,7)} & \rotatebox{90}{(13,7)} \\
		\midrule

		$2$ & 120 & 358 & 802 & 1516 & 2564 & 720 & 2612 & 6946 & 15234 & 29350 & 51530 & 5040 & 21514 & 66222 & 165318 & 357458 & 696228 & 1252572 \\
		$3$ & 120 & 327 & 678 & 1206 & 1944 & 720 & 2409 & 5931 & 12189 & 22245 & 37320 & 5040 & 20013 & 57216 & 133797 & 273402 & 507102 & 874320 \\
		$4$ & 120 & 304 & 576 & 936 & 1384 & 720 & 2248 & 5128 & 9784 & 16640 & 26120 & 5040 & 18768 & 50064 & 109560 & 210360 & 368040 & 600648 \\
		$5$ & 120 & 285 & 465 & 660 & 870 & 720 & 2115 & 4360 & 7510 & 11620 & 16745 & 5040 & 17715 & 43645 & 87995 & 156195 & 253940 & 387190 \\
		$6$ & 120 & 270 & 354 & 438 & 522 & 720 & 2004 & 3594 & 5490 & 7692 & 10200 & 5040 & 16818 & 37518 & 68604 & 111540 & 167790 & 238818 \\
		$7$ &  & 238 & 245 & 252 & 252 & 720 & 1911 & 2821 & 3752 & 4704 & 5677 & 5040 & 16051 & 31472 & 51380 & 75852 & 104965 & 138796 \\
		$8$ &  &  &  & 98 & 98 & 720 &  & 2032 & 2240 & 2448 & 2656 & 5040 & 15392 & 25376 & 36112 & 47600 & 59840 & 72832 \\
		$9$ &  &  &  &  & 2 & 720 &  &  & 909 & 918 & 927 & 5040 &  & 19179 & 22608 & 26064 & 29547 & 33057 \\
		$10$ &  &  &  &  &  & 720 &  &  &  & 170 & 170 & 5040 &  &  & 10570 & 10990 & 11410 & 11830 \\
		$11$ &  &  &  &  &  & 720 &  &  &  &  & 11 & 5040 &  &  &  & 2706 & 2717 & 2728 \\
		$12$ &  &  &  &  &  &  &  &  &  &  &  &  &  &  &  &  & 312 & 312 \\
		$13$ &  &  &  &  &  &  &  &  &  &  &  &  &  &  &  &  &  & 13 \\
		$2^2$ & 120 & 306 & 586 & 964 & 1444 & 720 & 2252 & 5162 & 9914 & 16990 & 26890 & 5040 & 18786 & 50242 & 110358 & 212838 & 374228 & 614004 \\
		$2^3$ &  & 270 & 342 & 414 & 486 & 720 & 2004 & 3582 & 5454 & 7620 & 10080 & 5040 & 16830 & 37518 & 68532 & 111300 & 167250 & 237810 \\
		$2^4$ &  &  &  & 102 & 102 & 720 &  & 2062 & 2302 & 2548 & 2800 & 5040 & 15406 & 25390 & 36340 & 48268 & 61186 & 75106 \\
		$2^5$ &  &  &  &  &  &  &  &  &  & 260 & 260 & 5040 &  &  & 10660 & 11260 & 11860 & 12460 \\
		$3^2$ & 120 & 272 & 356 & 442 & 530 & 720 & 2004 & 3602 & 5518 & 7756 & 10320 & 5040 & 16820 & 37540 & 68698 & 111810 & 168410 & 240050 \\
		$3^3$ &  &  &  &  & 18 &  &  &  & 948 & 966 & 984 & 5040 &  & 19200 & 22656 & 26166 & 29730 & 33348 \\
		$4^2$ &  &  &  & 98 & 98 & 720 &  & 2050 & 2258 & 2468 & 2680 & 5040 & 15394 & 25378 & 36132 & 47660 & 59966 & 73054 \\
		$5^2$ &  &  &  &  &  &  &  &  &  &  &  &  &  &  &  &  &  &  \\
		$3,2$ & 120 & 286 & 466 & 660 & 868 & 720 & 2116 & 4366 & 7528 & 11660 & 16820 & 5040 & 17722 & 43690 & 88150 & 156590 & 254780 & 388772 \\
		$4,2$ & 120 &  & 350 & 430 & 510 & 720 &  & 2026 & 2226 & 2426 & 2626 & 5040 & 16822 & 37518 & 68580 & 111460 & 167610 & 238482 \\
		$5,2$ &  &  & 242 & 252 & 262 & 720 & 1912 & 2822 & 3762 & 4732 & 5732 & 5040 & 16054 & 31470 & 51398 & 75948 & 105230 & 139354 \\
		$6,2$ &  &  &  & 72 & 72 & 720 &  & 2026 & 2242 & 2458 & 2674 & 5040 & 15394 & 25378 & 36154 & 47722 & 60082 & 73234 \\
		$7,2$ &  &  &  &  & 14 & 720 &  &  & 924 & 938 & 952 & 5040 &  & 19182 & 22626 & 26112 & 29640 & 33210 \\
		$8,2$ &  &  &  &  &  &  &  &  &  & 176 & 176 & 5040 &  &  & 10576 & 11056 & 11536 & 12016 \\
		$9,2$ &  &  &  &  &  &  &  &  &  &  & 18 & 5040 &  &  &  & 2790 & 2808 & 2826 \\
		$4,3$ & 120 &  & 230 & 238 & 246 & 720 & 1910 & 2824 & 3756 & 4706 & 5674 & 5040 & 16052 & 31474 & 51382 & 75852 & 104960 & 138782 \\
		$5,3$ &  &  &  & 82 & 82 & 720 &  & 2032 & 2232 & 2432 & 2632 & 5040 & 15392 & 25368 & 36086 & 47546 & 59748 & 72692 \\
		$6,3$ &  &  &  &  & 12 & 720 &  &  & 922 & 934 & 946 & 5040 &  & 19174 & 22636 & 26144 & 29698 & 33298 \\
		$7,3$ &  &  &  &  &  &  &  &  &  & 154 & 154 & 5040 &  &  & 10544 & 10964 & 11384 & 11804 \\
		$8,3$ &  &  &  &  &  &  &  &  &  &  & 16 & 5040 &  &  &  & 2736 & 2752 & 2768 \\
		$5,4$ &  &  &  &  & 2 &  &  &  & 882 & 892 & 902 & 5040 &  & 19164 & 22602 & 26062 & 29544 & 33048 \\
		$6,4$ &  &  &  &  &  &  &  &  &  & 176 & 176 & 5040 &  &  & 10576 & 10984 & 11392 & 11800 \\
		$7,4$ &  &  &  &  &  &  &  &  &  &  & 2 & 5040 &  &  &  & 10964 & 10964 & 10964 \\
		$3,2^2$ &  & 270 & 238 & 252 & 266 & 720 & 1912 & 2826 & 3776 & 4762 & 5784 & 5040 & 16058 & 31470 & 51418 & 76044 & 105490 & 139898 \\
		$4,2^2$ &  &  &  & 82 & 82 & 720 & 2038 &  & 2262 & 2488 & 2716 & 5040 & 15398 & 25382 & 36216 & 47904 & 60450 & 73858 \\
		$5,2^2$ &  &  &  &  & 12 & 720 &  & 2032 &  & 932 & 952 & 5040 &  & 19170 & 22638 & 26158 & 29730 & 33354 \\
		$6,2^2$ &  &  &  &  &  &  &  &  &  & 188 & 188 & 5040 &  &  & 10588 & 11116 & 11644 & 12172 \\
		$7,2^2$ &  &  &  &  &  &  &  &  &  &  & 28 & 5040 &  &  &  & 10932 & 10932 & 10932 \\
		$4,3,2$ &  &  &  &  & 10 & 720 &  &  & 910 & 928 & 946 & 5040 &  & 19150 & 22660 & 26236 & 29878 & 33586 \\
		$5,3,2$ &  &  &  &  &  &  &  &  &  & 192 & 192 & 5040 &  &  & 10622 & 11102 & 11584 & 12068 \\
		$6,3,2$ &  &  &  &  &  &  &  &  &  &  & 14 & 5040 &  &  & 10576 & 10984 & 11384 & 11804 \\
		$3^2,2$ & 120 &  &  & 76 & 76 & 720 & 15394 &  &  & 11044 &  & 5040 & 15394 & 25362 & 36102 & 47614 & 59698 & 72954 \\
		$4,3^2$ &  &  &  &  &  &  &  &  &  & 220 & 220 & 5040 &  &  & 10680 & 11208 & 11740 & 12276 \\
		$5,4,2$ &  &  &  &  &  &  &  &  &  & 202 & 202 & 5040 &  &  & 10642 & 11062 & 11484 & 11908 \\
		$3,2^3$ &  &  &  &  & 6 & 720 &  &  & 886 & 916 & 946 & 5040 &  & 19150 & 22600 & 26144 & 29698 & 33298 \\
		$4,2^3$ &  &  &  &  &  &  &  &  &  & 212 & 212 & 5040 &  &  & 10612 & 11164 & 11716 & 12268 \\
		$5,2^3$ &  &  &  &  &  &  &  &  &  &  & 32 & 5040 &  &  &  & 2972 & 3014 & 3056 \\
		$3^2,3$ &  &  &  &  &  &  &  &  & 948 &  &  & 5040 &  & 19186 & 22624 & 26098 & 29608 & 33154 \\
		$4,3,2^2$ &  &  &  &  &  &  &  &  &  & 144 & 144 & 5040 &  &  & 10524 & 10524 & 11340 & 11748 \\
		$5,3,2^2$ &  &  &  &  &  &  &  &  &  &  & 12 & 5040 &  &  &  & 2702 & 2724 & 2746 \\
		$3^2,2^2$ &  &  &  &  &  &  &  &  &  & 202 & 202 & 5040 &  &  & 10642 & 11116 & 11584 & 12068 \\
		$4^2,2$ &  &  &  &  &  &  &  &  &  & 188 & 188 & 5040 &  &  & 10588 & 11044 & 11500 & 11956 \\
		\bottomrule

		$N(n,r)$ & $120$ & 358 & 802 & 1516 & 2564 & 720 & 2612 & 6946 & 15234 & 29350 & 51530 & 5040 & 21514 & 66222 & 165318 & 357458 & 696228 & 1252572 \\
		\bottomrule

	\end{tabular}
\end{table}
\begin{rem}
Note that, according to the proof of Theorem \ref{th5}, for $r \in \{5,6,7\}$ we have 
$N(n,r) = |\Omega_{n,r}(\tau)|$, where $\tau$ is a transposition. Therefore, in view of 
Remark \ref{rem}, Conjecture \ref{con} holds for these values of $r$.
It is worth noting that Algorithm~1 for computing $N(n,r)$ for $r \geq 8$ was not 
feasible in GAP \cite{GAP}, since computing the values of $T_n^r$ and the orbits of 
the action specified in the algorithm became infeasible for such $r$. In the next section, 
we present a method that enables the computation of $N(n,r)$ for $r \geq 8$, 
even up to $r = 14$, for small values of $n$.
\end{rem}
\section{Computation of $N(n, r)$ Based on Irreducible Characters of $S_n$}
In this section, we present a formula for computing $N(n,r)$, which is based on the irreducible characters of the symmetric group $S_n$. We then provide an algorithm implementing this formula.  
One of the main advantages of this approach is that it allows the computation of $N(n,r)$ for larger values of the parameters. For instance, quantities such as $N(43,8)$ or $N(24,14)$ can be determined using this method. At the end of this section, we present these computations in Table \ref{table2}.  

We begin by reviewing the necessary notation.  
Let \(\operatorname{Irr}(S_n)\) denote the set of all complex irreducible characters of \(S_n\), and let \(\mathcal{C}_1, \dots, \mathcal{C}_m\) denote the conjugacy classes of \(S_n\). For each  \(i \in [m]\) and each character \(\chi \in \operatorname{Irr}(S_n)\), we define \(\chi_i := \chi(\sigma)\), where \(\sigma \in \mathcal{C}_i\). Since characters are constant on conjugacy classes, the value \(\chi_i\) is well-defined.
\begin{lem}\label{31}
Let  $n\geq 3$ and $2\leq r\leq n$ be integers   and let $\sigma\in S_n$. Let \(\mathcal{C}_1, \dots, \mathcal{C}_m\) denote the conjugacy classes of \(S_n\), where \(\mathcal{C}_1\) is the class containing the identity element. If $\sigma \in    \mathcal{C}_s$, then 
\[
|\Omega_{n, r}(\sigma)| =  \sum_{i,j \in \Delta_r^n } \frac{|\mathcal{C}_i||\mathcal{C}_j|}{n!} \sum_{\chi \in \operatorname{Irr}(S_n)} \frac{\chi_i \chi_j \chi_s}{\chi_1},
\]
where $\Delta^n_{r}$ are the sets of indices corresponding to classes whose elements move at most $r$   points.
\end{lem}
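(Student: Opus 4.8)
The plan is to reduce $|\Omega_{n,r}(\sigma)|$ to a sum of class-multiplication coefficients of the symmetric group, and then to evaluate those coefficients by the classical Frobenius formula.

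First I would observe that the defining condition $\alpha\in T_r^n$, namely $|M(\alpha)|\le r$, depends only on the cycle type of $\alpha$ and is therefore constant on each conjugacy class of $S_n$. Hence $T_r^n$ is a disjoint union of full conjugacy classes, precisely those indexed by $\Delta_r^n$. Recalling that $\Omega_{n,r}(\sigma)=\{(\alpha,\beta)\in T_r^n\times T_r^n : \alpha\beta=\sigma\}$ and partitioning each admissible pair $(\alpha,\beta)$ according to the class $\mathcal{C}_i$ containing $\alpha$ and the class $\mathcal{C}_j$ containing $\beta$, I obtain
\[
|\Omega_{n,r}(\sigma)|=\sum_{i,j\in\Delta_r^n}\bigl|\{(\alpha,\beta)\in\mathcal{C}_i\times\mathcal{C}_j : \alpha\beta=\sigma\}\bigr|.
\]

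Next I would identify the inner cardinality with a structure constant of the centre of the group algebra $\mathbb{C}[S_n]$. Writing $\widehat{\mathcal{C}_i}=\sum_{g\in\mathcal{C}_i}g$ for the class sum, the product $\widehat{\mathcal{C}_i}\,\widehat{\mathcal{C}_j}=\sum_k a_{ij}^k\,\widehat{\mathcal{C}_k}$ has coefficient $a_{ij}^k$ equal to the number of factorizations of any fixed element of $\mathcal{C}_k$ as a product of an element of $\mathcal{C}_i$ with one of $\mathcal{C}_j$; this number is independent of the chosen representative because for any $\delta\in S_n$ the map $(\alpha,\beta)\mapsto(\alpha^\delta,\beta^\delta)$ carries factorizations of $z$ into the prescribed classes to factorizations of $z^\delta$ into the same classes, and choosing $\delta$ with $z^\delta=z'$ gives a bijection between the factorization sets of any two elements $z,z'\in\mathcal{C}_k$. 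Thus, with $\sigma\in\mathcal{C}_s$, the inner cardinality above is exactly $a_{ij}^s$.

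Finally I would insert the Frobenius formula
\[
a_{ij}^k=\frac{|\mathcal{C}_i||\mathcal{C}_j|}{|S_n|}\sum_{\chi\in\operatorname{Irr}(S_n)}\frac{\chi_i\chi_j\overline{\chi_k}}{\chi(1)},
\]
which I would derive by expanding each class sum in the basis of central primitive idempotents $e_\chi=\frac{\chi(1)}{|S_n|}\sum_{g}\overline{\chi(g)}\,g$, using that $\widehat{\mathcal{C}_i}$ acts on the irreducible module affording $\chi$ as the central character $|\mathcal{C}_i|\chi_i/\chi(1)$, and then reading off the coefficient of $\widehat{\mathcal{C}_k}$. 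Since every irreducible character of $S_n$ takes integer values, $\overline{\chi_k}=\chi_k$; substituting $|S_n|=n!$ and $\chi(1)=\chi_1$ and summing over $i,j\in\Delta_r^n$ yields the claimed identity. The only genuinely non-formal ingredient is the Frobenius formula itself, so the main work is establishing (or citing) that formula; once the factorization count is recognized as a class-algebra structure constant, the remainder is bookkeeping.
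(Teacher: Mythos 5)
Your proposal is correct and follows essentially the same route as the paper: partition $\Omega_{n,r}(\sigma)$ over pairs of conjugacy classes indexed by $\Delta_r^n$, recognize each count as a class-algebra structure constant (independent of the representative of $\mathcal{C}_s$), and evaluate it by the Frobenius formula, which the paper simply cites from Isaacs rather than deriving. The only cosmetic difference is in disposing of the complex conjugate $\overline{\chi_s}$: you invoke the integrality of $S_n$-characters, while the paper uses the equivalent fact that every $\tau\in S_n$ is conjugate to $\tau^{-1}$.
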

\begin{proof}
According to the definition of \(\Omega_{n, r}(\sigma)\), we have
\[
|\Omega_{n, r}(\sigma)| = \left| \cup_{i,j \in \Delta_r^n} \{(\tau, \rho) \in \mathcal{C}_i \times \mathcal{C}_j \mid \tau \rho = \sigma\} \right|.
\]

Let \(i, j, s \in [m]\). For each \(\sigma \in \mathcal{C}_s\), the number of pairs \((x, y) \in \mathcal{C}_i \times \mathcal{C}_j\) such that \(xy = \sigma\) is clearly constant; we denote this number by \(\mathcal{C}_{i,j}^{s}\). Hence,
$
|\Omega_{n, r}(\sigma)| = \sum_{i,j \in \Delta_r^n} \mathcal{C}_{i,j}^{s}.
$
Moreover, due to the fact that for every $\tau \in S_n$, $\tau$ and $\tau^{-1}$ are into the same conjugate class and in view of \cite[Lemma 2.15 and problem 3.9]{isaacs},  for all \(i, j, s \in [m]\), we have
\[
\mathcal{C}_{i,j}^{s} = \frac{|\mathcal{C}_i||\mathcal{C}_j|}{n!} \sum_{\chi \in \operatorname{Irr}(S_n)} \frac{\chi_i \chi_j \chi_s}{\chi_1}.
\]
This completes the proof.
\end{proof}
\begin{thm}\label{thchar}
Let  $n\geq 3$ and $2\leq r\leq n$ be integers   and let   \(\mathcal{C}_1, \dots, \mathcal{C}_m\) denote the conjugacy classes of \(S_n\), where \(\mathcal{C}_1\) is the class containing the identity element. Then
\begin{equation}\label{formul}
N(n, r) = \max_{s \in \Delta_{2r}^n} \sum_{i,j \in \Delta_r^n } \frac{|\mathcal{C}_i||\mathcal{C}_j|}{n!} \sum_{\chi \in \operatorname{Irr}(S_n)} \frac{\chi_i \chi_j \chi_s}{\chi_1},
\end{equation}
where $\Delta^n_{r}$ and $\Delta^n_{2r}$ are the sets of indices corresponding to classes whose elements move at most $r$ and $2r$  points, respectively.
\end{thm}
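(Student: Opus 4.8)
The plan is to combine the previously established Remark~\ref{rem} with Lemma~\ref{31}. Recall that Remark~\ref{rem} reduces the computation of $N(n,r)$ to a maximum over conjugacy class representatives: if $\sigma_1,\dots,\sigma_\ell$ are representatives of those conjugacy classes whose elements move at most $2r$ points, then
\[
N(n,r) = \max\bigl\{\, |\Omega_{n,r}(\sigma_i)| : 1 \leq i \leq \ell \,\bigr\}.
\]
In the notation of this section, the indices of precisely these classes are collected in the set $\Delta_{2r}^n$, since a class belongs to it exactly when its elements move at most $2r$ points. Hence the maximum appearing in Remark~\ref{rem} is exactly $\max_{s \in \Delta_{2r}^n} |\Omega_{n,r}(\sigma)|$, where $\sigma$ is any representative of the class $\mathcal{C}_s$.

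The second ingredient is Lemma~\ref{31}, which I would invoke directly: for any $\sigma \in \mathcal{C}_s$ it gives the closed form
\[
|\Omega_{n,r}(\sigma)| = \sum_{i,j \in \Delta_r^n} \frac{|\mathcal{C}_i||\mathcal{C}_j|}{n!} \sum_{\chi \in \operatorname{Irr}(S_n)} \frac{\chi_i \chi_j \chi_s}{\chi_1}.
\]
Substituting this expression into the maximum from Remark~\ref{rem} yields precisely the formula~(\ref{formul}) in the statement of the theorem. The proof is therefore essentially a matter of assembling these two already-proved results, so I would keep it short: first cite Remark~\ref{rem} to express $N(n,r)$ as a maximum over $\Delta_{2r}^n$, then substitute the character-sum formula for each $|\Omega_{n,r}(\sigma)|$ from Lemma~\ref{31}.

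The only point requiring a moment of care is the well-definedness of the maximization. Each summand $|\Omega_{n,r}(\sigma)|$ depends on $s$ only through the value $\chi_s$, which is constant on the conjugacy class $\mathcal{C}_s$ because characters are class functions; thus the inner double sum is genuinely a function of the class index $s$ rather than of the chosen representative $\sigma$, so the maximum over $s \in \Delta_{2r}^n$ is well-defined. This is the same observation that underlies Lemma~\ref{31}, and no additional obstacle arises. I do not anticipate any genuine difficulty here, since all the substantive work — the orbit-counting reduction of Remark~\ref{rem} and the character-theoretic evaluation of the structure constants $\mathcal{C}_{i,j}^{s}$ in Lemma~\ref{31} — has already been carried out; the theorem is the direct synthesis of the two.
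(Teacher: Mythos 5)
Your proposal is correct and matches the paper's own proof, which likewise deduces the theorem immediately by combining Remark~\ref{rem} (reducing $N(n,r)$ to a maximum of $|\Omega_{n,r}(\sigma)|$ over representatives of classes moving at most $2r$ points) with the character-sum formula of Lemma~\ref{31}. The extra remark on well-definedness via class functions is a harmless elaboration of what the paper leaves implicit.
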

\begin{proof}
The result follows from Lemma \ref{31} and Remark \ref{rem}.
\end{proof}
Algorithm~2 is presented for computing $N(n,r)$ for given values of $n$ and $r$ as expressed in relation (\ref{formul}).
As specified in the algorithm, the output of \texttt{ComputeClass} \texttt{OmegaList} is a list of ordered pairs, where the first element of each pair is a representative of a conjugacy class that moves at most $2r$ elements, and the second element is the value of $\Omega_{n,r}$ for that permutation. The function \texttt{computeN} returns a list of pairs from the list produced by \texttt{computelistomega} whose second components are equal to the maximum value among all second components in the list.  
In fact, the second component of each element in the output corresponds to $N(n,r)$, while the first component identifies the conjugacy class whose representative achieves this value of $\Omega_{n,r}$.

By running Algorithm 2 in \textsf{GAP} \cite{GAP}, the values of $N(n,r)$ were computed as summarized in Table \ref{table2}. 
In most cases the computations were performed quickly, and even in the most time-consuming instances the running time did not exceed three days; entries in Table~\ref{table2} marked with ``--'' indicate cases where the computation required more time.  
Because of the large number of conjugacy classes whose representatives move at most $2r$ points, we did not report the values of $\Omega_{n,r}(\sigma)$ for each $\sigma$. 
It is worth noting that all results in Table \ref{table2} confirm Conjecture \ref{con}; 
in other words, in every case, $N(n,r)$ coincides with $I(n,2,r)=|\Omega_{n,r}(\tau)|$, where $\tau$ is a transposition in $S_n$.
{\fontsize{5.5}{6.5}\selectfont
	\setlength{\tabcolsep}{0.8pt}
\begin{longtable}{|>{\columncolor{lightgray}}c|*{7}{c|}}
\caption{\small{$N(n,r)$ obtained from Algorithm~2 for $8 \leq r \leq 14$}}\label{table2}\\
\hline
\rowcolor{lightgray}
$n \diagdown r$ & $8$ & $9$ & $10$ & $11$ & $12$ & $13$ & $14$ \\
\hline
\endfirsthead
\hline
8  & 40320 &   &   &   &  &   &   \\
9  & 197864 & 362880 &   &   &   &  &  \\
10 & 691886 & 2012014 & 3628800 &   &   &   &   \\
11 & 1937162 & 7877738 & 22428812 & 39916800 &   &   &   \\
12 & 4645488 & 24447408 & 97202778 & 272082658 & 479001600 &   &   \\
13 & 9940944 & 64396224 & 331165914 & 1293005254 & 3569113616 & 6227020800 &  \\
14 & 19493964 & 150186636 & 950495706 & 4797853066 & 18454503238 & 50349389446 & 87178291200 \\
15 & 35674212 & 318841668 & 2399645250 & 14903556670 & 74082374082 & 281399134434 & 760174857236 \\
16 & 61722264 & 628057176 & 5483265534 & 40494217510 & 247620078452 & 1215098293428 & 4566528353042 \\
17 & 101940096 & 1163818056 & 11567835966 & 99095215906 & 720472798732 & 4348516104892 & 21105666402962 \\
18 & 161900378 & 2049683418 & 22857719238 & 222920301958 & 1879927189494 &13489665769206 & 80518266961486 \\
19 & 248674574 & 3457905742 & 42761973402 & 467894961682 & 4492054545698 & 37386313854882 & 265283557908634 \\
20 & 371079848 & 5622549032 & 76369870820 & 926635847380 & 9980994911416 & 94566233135032 & 778257965296288 \\
21 & 539944776 & 8854770984 & 131054690436 & 1746560045900 & 20861318069976 &  221751258851064 &  2077485960431616 \\
22 & 768393864 &  13560434184 & 217226966604 & 3154509431084 &41384025479236 &  487806116103876& 5127142870055256 \\
23 & 1072150872 & 20260211352 & 349259994492 & 5489504307332 & 78473125853804 & 1015959516165548 & 11841078608718768 \\
24 & 1469860944 & 29612349648 & 546612008868 & 9245486999828 & 143048793168360 & 2018021573791848 & 25833283577408932 \\
25 & 1983431544 & 42438259056 & 835171069860 & 15126179983580 & 251855106281752 & 3845552935810104 & -- \\
26 & 2638392198 & 59751089862 & 1248850306068 & 24114464568020 & 429935481079172 & -- & -- \\
27 & 3464273042 & 82787464242 & 1831462782192 & 37558985066492 & 713927345918412 & -- & -- \\
28 & 4495002176 & 113042526976 & 2638906875126 & 57280999780526 &1156379586164896 & -- & -- \\
29 & 5769321824 & 152308480304 & 3741694659254 &85704834017354 & -- & -- & -- \\
30 & 7331223300 & 202716767940 & 5227857418470 & 126015641735670 & -- & -- & -- \\
31 & 9230400780 & 266784073260 & 7206264019230 & 182348551279170 & -- & -- & -- \\
32 & 11522723880 & 347462296680 & 9810389495730 & 260013657009930 & -- & -- & -- \\
33 & 14270729040 & 448192677240 & 13202572815090 & 365761722494190 & -- & -- & -- \\
34 & 17544129714 & 572964223410 & 17578804407210 &508095882221610 & -- & -- & -- \\
35 & 21420345366 & 726376618134 & 23174085660750 & 697635067655550 & -- & -- & -- \\
36 & 25985049272 & 913707763128 & 30268404203472 &947535339716400 & -- & -- & -- \\
37 & 31332735128 & 1140986127448 & 39193370401968 & 1273975783592448 & -- & -- & -- \\
38 & 37567302464 & 1415068065344 & 50339562132584 & -- & -- & -- & -- \\
39 & 44802660864 & 1743720268416 & 64164626492136 & -- & -- & -- & -- \\
40 & 53163352992 & 2135707517088 & 81202188733800 & -- & -- & -- & -- \\
41 & 62785196424 &--  & -- & -- & -- & -- & -- \\
42 & 73815944286 & -- & -- & -- & -- & -- & -- \\
43 & 86415964698 & -- & -- & -- & -- & -- & -- \\
\hline

\end{longtable}}
\begin{algorithm}[H]
\caption{Compute \(N(n, r)\) as obtained from Equation (\ref{formul})}
\begin{algorithmic}[1]
\Input Integer numbers  \(n \) and  \(2\leq r \leq n\)
\Output A list of pairs $(\sigma, N(n,r))$, where each $\sigma$ is a representative of a conjugacy class whose $\Omega_{n,r}(\sigma)$ attains the value $N(n,r)$.

\Function{SumChar}{$G, i, j, \ell$}
  \State \(total \gets 0\)
  \ForAll{\(\chi \in \mathrm{Irr}(G)\)}
    \State \(total \gets total + \frac{\chi_i \cdot \chi_j \cdot \chi_\ell}{\chi_1}\)
  \EndFor
  \State \Return \(total\)
\EndFunction

\Function{ComputeClassOmegaList}{$n, r$}
  \State \(G \gets \) the symmetric group \(S_n\)
  \State \(C \gets\) a list of conjugacy classes of $G$
 \State \(\Delta_{r} \gets \{\,\ell \in [|C|]\, :\, C[\ell]\subseteq T_{r}^n\}\)
  \hfill\Comment{\textcolor{blue}{Indices of classes moving at most $r$ points}}
\State \(\Delta_{2r} \gets\)  a list of  the indices of the nontrivial conjugacy classes of $G$ that move at most $2r$ points.
  \State \(resultList \gets [\,]\) \hfill\Comment{\textcolor{blue}{Stores computed pairs \((\sigma_i, |\Omega_{n,r}(\sigma_i)|)\)}}
  \ForAll{\(\ell \in \Delta_{2r}\)}
    \State \(N_\ell \gets 0\) 
    \ForAll{\(i \in \Delta_r\)}
      \ForAll{\(j \in \Delta_r\)}
        \State \(w \gets \frac{|C[i]| \cdot |C[j]|}{|G|}\)
        \State \(s \gets \mathrm{SumChar}(G, i, j, \ell)\)
        \State \(N_\ell \gets N_\ell + w \cdot s\)
      \EndFor
    \EndFor
    \State Append \((\text{an representative of}\,\, C[\ell], N_\ell)\) to \(resultList\)
  \EndFor
  \State \Return \(resultList\)
\EndFunction
\Function{ComputeN}{$n, r$}
   \State \(data \gets \) ComputeClassOmegaList \(n,r)\)
  \State \Return The list of pairs in $data$ whose second components attain the maximum value among all second components of pairs in $data$.
\EndFunction
\end{algorithmic}
\end{algorithm}

\section{Appendix}\label{appen}
The following \textsf{GAP} \cite{GAP} code computes $N(n,5)$ using Algorithm 1 for $n \geq 10$:
{\scriptsize
\begin{verbatim}
u:=function(sigma,tau) 
return Union(MovedPoints(sigma),MovedPoints(tau));
end;;
su:=function(sigma,tau) 
return Size( u(sigma,tau));
end;;
csu:=function(sigma,tau)
return Size(Centralizer(SymmetricGroup(u(sigma,tau)),Group(sigma,tau)));
end;;
n:=Indeterminate(Rationals,"n");
p:=function(sigma,tau) 
local b,l,l1,i; 
l:=su(sigma,()); l1:= su(sigma,tau); 
if l=l1 then b:=1; fi; 
if l<l1 then b:=Product([l..l1-1],i->n-i); fi; 
return b; 
end;;
CU:=function(sigma,tau) 
return csu(sigma,())/csu(sigma,tau)*p(sigma,tau);
end;;
S10:=SymmetricGroup(10);
C10:=ConjugacyClasses(S10);
T5:=Union(Filtered(C10,x->Size(MovedPoints(Representative(x)))<=5));;
c:=List(C10,Representative);;
F:=List(c,a->[a,Filtered(T5,x->x^-1*a in T5)]);;
F2:=List(F,X->[X[1],Orbits(Centralizer(S10,X[1]),X[2])]);;
F3:=List(F2,X->[X[1],List(X[2],Representative)]);;
F4:=Difference(F3,[F3[1]]);;
List(F4,X->Sum(X[2],x->CU(X[1],x)));
\end{verbatim}}
In the following, we present the result of the above program in \textsf{GAP}:
\begin{lstlisting}
[32/3*n^3-89*n^2+739/3*n-220, 2/3*n^3+35*n^2-779/3*n+460, 72*n-162, 102,
0, 11/2*n^3-27*n^2+7/2*n+90,7*n^2+89*n-500, 14*n+140, 6, n^2+71*n-190, 
76, 4, 18, 44*n^2-300*n+520, 80*n-210, 82, 0, 8*n+174, 10, 0, 98, 0, 
15/2*n^2+165/2*n-480, 10*n+172, 12, 82, 2, 2, 2, 84*n-234, 72, 0, 12,
0, 7*n+189, 14, 0, 80, 0, 9, 0 ]
\end{lstlisting}
By running a similar program in \textsf{GAP} to compute $N(n,6)$ and $N(n,7)$, we obtained the following results, respectively.
\begin{lstlisting}
[ 181/12*n^4-401/2*n^3+11783/12*n^2-4153/2*n+1590, 
  3/4*n^4+329/6*n^3-2735/4*n^2+15769/6*n-3250, 
  147*n^2-627*n-810, 3*n^2+189*n+358, 260, 20, 
  53/8*n^4-193/4*n^3-197/8*n^2+3265/4*n-1455, 
  29/3*n^3+224*n^2-8231/3*n+7030, 18*n^2+644*n-3478, 
  30*n+616, 26, 2/3*n^3+143*n^2-1979/3*n-614, 
  200*n+426, 220, 2, 18*n+786, 6, 6, 
  212/3*n^3-808*n^2+9172/3*n-3800, 151*n^2-679*n-642, 
  n^2+207*n+318, 212, 8, 9*n^2+779*n-3984, 18*n+748, 
  18, 144, 2, n^2+191*n+458, 188, 4, 22, 0, 
  55/6*n^3+465/2*n^2-8375/3*n+7120, 15*n^2+685*n-3618, 
  20*n+732, 32, 200*n+432, 192, 0, 12, 10*n+792, 
  12, 0, 202, 0, 153*n^2-705*n-558, 216*n+298, 188, 2, 
  12*n+814, 14, 2, 176, 2, 2, 2, 21/2*n^2+1505/2*n-3871, 
  14*n+798, 28, 154, 0, 14, 0, 208*n+368, 176, 0, 16, 0, 
  9*n+828, 18, 0, 170, 0, 11, 0 ]
\end{lstlisting}
\begin{lstlisting}
[ 607/30*n^5-4675/12*n^4+8807/3*n^3-129041/12*n^2+190471/10*n-12978, 
  11/15*n^5+327/4*n^4-8917/6*n^3+37061/4*n^2-730247/30*n+22582, 238*n^3-1263*n^2-9487*n+51702, 
  2*n^3+429*n^2+2257*n-31130, 600*n+4660, 740, 0, 309/40*n^5-615/8*n^4-1031/8*n^3+31551/8*n^2-308017/20*n+18543, 
  47/4*n^4+2809/6*n^3-36467/4*n^2+302381/6*n-89810, 71/3*n^3+1629*n^2-52250/3*n+39018, 33*n^2+2883*n-9470, 66*n+2246, 
  20, 3/4*n^4+1345/6*n^3-4795/4*n^2-56365/6*n+50848, 386*n^2+3406*n-36558, 2*n^2+486*n+5620, 494, 12, 
  27*n^2+2943*n-9474, 36*n+2292, 42, 438, 0, 103*n^4-1730*n^3+10649*n^2-28222*n+26880, 242*n^3-1351*n^2-8851*n+50190, 
  2/3*n^3+407*n^2+8761/3*n-34354, 552*n+5092, 536, 0, 38/3*n^3+1901*n^2-58931/3*n+45052, 23*n^2+3025*n-9914, 
  38*n+2410, 32, 408*n+6444, 398, 4, 6, 2/3*n^3+367*n^2+10801/3*n-37238, 456*n+6028, 436, 0, 22*n+2546, 20, 4, 240, 
  0, 265/24*n^4+5825/12*n^3-222565/24*n^2+612325/12*n-90755, 55/3*n^3+1761*n^2-55498/3*n+41958, 26*n^2+2974*n-9702, 
  42*n+2510, 38, 371*n^2+3669*n-37704, n^2+459*n+5932, 414, 6, 22*n+2460, 34, 0, 11*n^2+3229*n-10788, 22*n+2510, 22, 
  294, 2, 26, n^2+399*n+6552, 384, 4, 26, 0, 244*n^3-1395*n^2-8533*n+49434, 396*n^2+3252*n-35966, 528*n+5308, 434, 0, 
  18*n^2+3096*n-10136, 24*n+2492, 38, 350, 0, 408*n+6496, 386, 0, 14, 0, 12*n+2510, 14, 0, 362, 0, 
  77/6*n^3+1897*n^2-117677/6*n+44975, 21*n^2+3045*n-9924, 28*n+2592, 44, 420*n+6344, 366, 2, 16, 14*n+2578, 16, 2, 
  324, 2, 2, 2, 376*n^2+3592*n-37408, 480*n+5776, 384, 0, 16*n+2560, 32, 0, 288, 0, 16, 0, 27/2*n^2+6345/2*n-10467, 
  18*n+2592, 36, 306, 0, 18, 0, 420*n+6370, 360, 0, 20, 0, 11*n+2585, 22, 0, 312, 0, 13, 0 ]
\end{lstlisting}


Here is a code written in \textsf{GAP} \cite{GAP} to compute $N(n,r)$ from Algorithm 2:
{\scriptsize
 \begin{verbatim}
sumchar:= function(G,i,j,s,NCju)
local A,irr,x;
irr:=Irr(G);;
A:=[];;
for x in [1..NCju] do
Add(A,(irr[x][i]*irr[x][j]*irr[x][s])/irr[x][1]);
od;
return(Sum(A)); 
end;
N:=function(n,r)
local G,Cju,NCju,Ir,I2r,H,k,list,c,l,i,j,cijl;
G:=SymmetricGroup(n);
Cju:=ConjugacyClasses(G);;
NCju:=NrConjugacyClasses(G);
Ir:=Filtered([1..NCju],t->NrMovedPoints(Representative(Cju[t]))<=r);
I2r:=Filtered([2..NCju],t->NrMovedPoints(Representative(Cju[t]))<=2*r);
resultlist:=[];
for l in I2r do
k:=0;
for i in Ir do
for j in Ir do
cijl:=((Size(Cju[i])*Size(Cju[j]))/Size(G))*sumchar(G,i,j,l,NCju);
k:=k+cijl;
od;
od;
Add(resultlist,[Representative(Cju[l]),k]);
od;
return(resultlist);
end;
ComputN:=functiona(n,r)
local data,max, maxdata;
data:=nr(n,r);;
max:=Maximum(List(data,i->i[2]));;
maxdata:=Filtered(data,i->i[2]=max);;
return(maxdata);
end;

\end{verbatim}}

\end{document}